\newcommand\str{\bgroup\markoverwith
{\textcolor{red}{\rule[0.5ex]{2pt}{1.5pt}}}\ULon} 
\newcommand{\xmark}{\ding{55}}%
\newcommand{\al}{\alpha}
\newcommand{\tnabla}{\tilde{\nabla}}
\newcommand{\tPhi}{\tilde{\Phi}}
\newcommand{\id}{I_d}
\newcommand{\bigO}{\mathcal{O}}
\newcommand{\reals}{{\mathbb{R}}}
\newcommand{\E}{\mathbb{E}}
\newcommand{\err}{\text{err}}
\newtheorem{theorem}{Theorem}[section]  
\newtheorem{definition}[theorem]{Definition}
\newtheorem{example}[theorem]{Example}
\newtheorem{lemma}[theorem]{Lemma}
\newtheorem{remark}[theorem]{Remark}
\newtheorem{corollary}[theorem]{Corollary}
\newtheorem{assumption}[theorem]{Assumption}
\newcommand{\beq}{\begin{equation}}
\newcommand{\eeq}{\end{equation}}
\newcommand{\beqa}{\begin{eqnarray}}
\newcommand{\eeqa}{\end{eqnarray}}
\newcommand{\beqs}{\begin{equation*}}
\newcommand{\eeqs}{\end{equation*}}
\newcommand{\beqas}{\begin{eqnarray*}}
\newcommand{\eeqas}{\end{eqnarray*}}
\DeclarePairedDelimiter\ceil{\lceil}{\rceil}
\begin{document}
\title{An Optimal Multistage Stochastic Gradient Method for Minimax Problems}
\author{Alireza Fallah\thanks{Authors are in alphabetical order.\newline Department of Electrical Engineering and Computer Science, Massachusetts Institute of Technology, Cambridge, MA, USA. \{afallah, asuman, sarathp\}@mit.edu.} , Asuman Ozdaglar$^*$, Sarath Pattathil$^*$}
\date{}

\maketitle

\begin{abstract}
In this paper, we study the minimax optimization problem in the smooth and strongly convex-strongly concave setting when we have access to noisy estimates of gradients. In particular, we first analyze the stochastic Gradient Descent Ascent (GDA) method with constant stepsize, and show that it converges to a neighborhood of the solution of the minimax problem. We further provide tight bounds on the convergence rate and the size of this neighborhood. Next, we propose a multistage variant of stochastic GDA (M-GDA) that runs in multiple stages with a particular learning rate decay schedule and converges to the exact solution of the minimax problem. We show M-GDA achieves the lower bounds in terms of noise dependence without any assumptions on the knowledge of noise characteristics.  We also show that M-GDA obtains a linear decay rate with respect to the error's dependence on the initial error, although the dependence on condition number is suboptimal. In order to improve this dependence, we apply the multistage machinery to the stochastic Optimistic Gradient Descent Ascent (OGDA) algorithm and propose the M-OGDA algorithm which also achieves the optimal linear decay rate with respect to the initial error. To the best of our knowledge, this method is the first to simultaneously achieve the best dependence on noise characteristic as well as the initial error and condition number. 
\end{abstract}

\section{Introduction}
The minimax optimization problem has recently gained tremendous attention {as the canonical problem formulation for robust training of machine learning models and Generative Adversarial Networks (GANs) (see \cite{DBLP:conf/iclr/MadryMSTV18,goodfellow2014generative, pmlr-v70-arjovsky17a}).}
While many papers have studied the convergence of a broad range of algorithms in the deterministic setting, i.e., when the gradient information is exact, many aspects of this problem in the stochastic setting are yet to be explored. This is the main goal of our manuscript as we provide a framework for analyzing minimax optimization algorithms which can be used for both the deterministic and stochastic settings.

We consider the minimax problem 
\begin{equation}\label{main_prob}
\min_{x \in \mathbb{R}^m} \max_{y \in \mathbb{R}^n} f(x,y)    
\end{equation}
where $f : \mathbb{R}^m \times \mathbb{R}^n \to \mathbb{R}$ is $L-$smooth and $\mu$-strongly convex-strongly concave (See Section \ref{prelim} for the precise statement of our assumptions). The condition number of the problem is defined as $\kappa:= {L}/{\mu}$. Due to the strong convexity-strong concavity of the function $f$, this problem has a unique saddle point which we denote by $(x^*, y^*)$, i.e.,
\begin{align*}
f(x^*, y) \leq f(x^*, y^*) \leq f(x, y^*) \quad \forall~x \in \reals^n, y \in \reals^m. 
\end{align*}
In this paper, our main focus is on the case when the exact gradient information is not available and we only have access to an unbiased estimate through a stochastic oracle. 
More formally, we assume\footnote{see Assumption \ref{var_assmp:main} for detailed statement of the assumption} that at a point $(x_k,y_k)$, we have access to noisy estimates of the gradients  
 $\tnabla_x f(x_k,y_k)$ and $\tnabla_y f(x_k,y_k)$, which are unbiased estimates of $\nabla_x f(x_k,y_k)$ and $\nabla_y f(x_k,y_k)$, respectively, and their variances are bounded by $\sigma^2$.

This setting arises in many applications, including the problem of training GANs where the generator and the discriminator approximate the gradient by taking a batch of data $\mathcal{D}$ and computing $\frac{1}{|\mathcal{D}|} \sum_{\theta \in \mathcal{D}} \tilde{\nabla}_i f(x,y,;\theta)$ where $i \in \{x,y\}$ and $\tilde{\nabla}_i f(x,y,;\theta)$ is the gradient computed over a single data point $\theta$.
It is worth noting that the inexact gradient issue also appears in other scenarios such as {privacy-related applications where the noise is added intentionally to prevent the model from remembering possibly sensitive data and preserve privacy \cite{xie2018differentially}} or when the presence of noise is inevitable due to imperfections in communication and sensing.


In solving the minimization problem in the stochastic setting, it is well-known that, for many algorithms, the squared distance of the iterates to the solution of the minimization problem can be bounded by the sum of two terms: \textit{bias} and \textit{variance} \cite{bach-non-strongly-cvx, ghadimi2012optimal, MASG}. The bias term captures the effect of the initialization expressed in terms of the distance of the initial point to the solution, and is independent of the noise parameters. The variance term depends on noise characteristics ($\sigma^2$ in our case) and is independent of the initialization error. For the minimization problem with strongly convex objective function, and in the noiseless case (with only the bias term), \cite{nemirovsky1983problem} have shown the lower bound of $\Theta \left ( \exp(-\Theta(1) n/\sqrt{\kappa}) \right )$ for the distance of the $n$-th iterate to the optimal solution. With noise \cite{raginsky2011information} have shown the lower bound increases to $\Theta(\sigma^2/n)$.
Several papers have highlighted the trade-off between bias and variance which arises in design of optimization algorithms \cite{RAGM} and tried to achieve both lower bounds simultaneously \cite{ghadimi2013optimal, MASG}.  

In this paper, we highlight this bias-variance decomposition in evaluating the performance of algorithms that solve the minimax problem. For the bias term, i.e., the deterministic case, \cite{ibrahim2019lower} have recently shown the lower bound $\bigO(1)\exp(-\Theta(1) n/\kappa)$ highlighting that the dependence on condition number increases from $\sqrt{\kappa}$ in minimization problems to $\kappa$ for minimax problems.  
For the variance term, since the minimax problem is a special case of the minimization problem, the lower bound $\bigO(\sigma^2/n)$ of the minimization problem is also valid for the minimax problem.
While this lower bound for variance term has been obtained \cite{hsieh2019convergence, rosasco2014stochastic} at the cost of making the bias term sublinear, the question of whether a linear rate $\bigO(1)\exp(-\Theta(1) n/\kappa)$ in bias and $\bigO(\sigma^2/n)$ in variance could be achieved simultaneously has not been addressed prior to this work.

In what follows, we first provide a summary of related works and then discuss the main contributions of our paper.
\subsection{Related Work}

\subsubsection{Deterministic Case}
Many papers have studied the minimax problem when the exact gradient information is available. In the case of Gradient Descent Ascent (GDA) method, \cite{DuHu} analyzes its performance for the special case of bilinear coupling, i.e., when $f(x, y) = g(x) + y^{\top} A x - h(y)$ where $g$ is smooth and convex, $h$ is smooth and strongly convex, and the matrix $A$ has full column rank. They show that running the GDA algorithm for $n$ steps on this problem reaches a point which is $\mathcal{O}(1)\left(1/n^2 \right)$ close to the saddle point. In addition, when the function $g(\cdot)$ is assumed to be strongly convex, GDA reaches a point which is $\mathcal{O}(1)\text{exp}\left(-  \Theta(1) n/\kappa^2 \right)$ close to the saddle point after $n$ steps.       
\cite{LiangStokes} extend this result to a general function $f(x,y)$ which is strongly convex in $x$ and strongly concave in $y$ {(achieving the same rate of convergence as \cite{DuHu})}. 
Several other gradient based algorithms like the Optimistic Gradient Descent Ascent (OGDA) method (see \cite{DBLP:conf/iclr/DaskalakisISZ18}) and the Extragradient method \cite{korpelevich1976extragradient} have been analyzed in recent papers including 
\cite{mokhtari2019unified, LiangStokes, GidelBVVL19, mokhtari2019proximal, hsieh2019convergence}. These papers analyze these algorithms in several settings including bilinear, strongly convex-strongly concave and convex-concave. More specifically, \cite{GidelBVVL19,mokhtari2019unified} show that when the objective function is strongly convex-strongly concave, running the OGDA and Extragradient algorithms for $n$ steps reaches a point which is $\bigO(1)\exp(-\Theta(1) n/\kappa)$ close to the saddle point.  

\subsubsection{Stochastic Case}

The papers which are closest to our results are \cite{rosasco2014stochastic} and \cite{hsieh2019convergence}. \cite{rosasco2014stochastic} propose a forward-backward splitting algorithm to solve the stochastic minimax problem (they solve the more general problem of monotone inclusions). When the function is strongly convex-strongly concave, they show convergence at a rate of $\mathcal{O}(\|z_0 - z^*\|^2/n^p + \sigma^2 c^p/{n})$ to the saddle point, where $p$ is any constant greater than $0$ and $c$ is a constant larger than 1.
\cite{hsieh2019convergence} show that the stochastic version of OGDA converges to the saddle point at a rate of $\mathcal{O}(\frac{1}{n})$ for both bias and variance when the objective function is strongly convex-strongly concave.

There are several papers which analyze the stochastic minimax problem when the objective function is convex-concave. \cite{juditsky2011solving} propose the stochastic mirror-prox algorithm (a special case of which is the stochastic extragradient method) to solve the convex-concave saddle point problem with noisy gradients. They assume the constraint set is compact and show a convergence rate of $\mathcal{O}(1/\sqrt{n})$ (the result in this paper improves on the robust stochastic approximation algorithm proposed in \cite{nemirovski2009robust}). 
\cite{chen2014optimal} proposes an accelerated primal dual algorithm which achieves a convergence rate of $\mathcal{O}(1/\sqrt{n})$.
Recently, \cite{mertikopoulos2018mirror} analyzed the stochastic extragradient algorithm for coherent minimax problems (a condition slightly weaker than convex-concave assumption) and they show asymptotic convergence to a saddle point.
\cite{GidelBVVL19} analyzed a single call version of extragradient (which corresponds to OGDA) when the function is convex-concave and they showed that in the stochastic setting, this algorithm converges to the saddle point at a rate of $\mathcal{O}(1/\sqrt{n})$. 

Another line of work is the case where the objective function has a finite sum structure and the gradient of the entire function cannot be computed at each step. Several papers including \cite{bot2019forward, PalaBach, chavdarova2019reducing, iusem2017extragradient} analyze this setting and apply variance reduction techniques (like SVRG and SAGA) to improve convergence rates to the saddle point. 
\subsection{Our Contribution}
{We first analyze GDA with constant stepsize (learning rate) where we build our analysis by casting it as a dynamical system, an approach that has gained attention in the optimization and machine learning literature recently \cite{lessard2016analysis, hu2017dissipativity, RAGM, MASG}. In particular, we show that GDA with any stepsize $\alpha \leq \mu/(4 L^2)$ converges to an $\bigO(\alpha)$ neighborhood of the optimal solution at a linear rate $\exp(- \alpha \mu k)$. Next, we propose a novel Multistage-Stochastic Gradient Descent Ascent scheme (inspired from \cite{MASG}) which achieves a rate of $\mathcal{O}({\sigma^2}/{n})$ for the variance term (which is optimal in terms of $n$ dependence) and a rate of $\bigO(1)\exp(-\Theta(1) n/\kappa^2)$ for the bias term, and we show that the $n$ and $\kappa$ dependence of the latter cannot be improved for GDA dynamics.}

{Next, we focus on the OGDA method which has gained widespread attention for solving minimax problems. We first highlight that OGDA also converges to an $\bigO(\alpha)$ neighborhood of the optimal solution with linear rate $\exp(- \alpha \mu k)$, but allows for a broader range of $\alpha \leq 1/(8 L)$ for the stepsize. Then, we introduce the Multistage version of Stochastic Optimistic Gradient Descent Ascent (M-OGDA) which achieves the rate of $\mathcal{O}({\sigma^2}/{n})$ for the variance term and a rate of $\bigO(1)\exp(-\Theta(1) n/\kappa)$ for the bias term which improves on the $\bigO(1)\exp(-\Theta(1) n/\kappa^2)$ decay of the bias term of GDA and matches the lower bound shown in \cite{ibrahim2019lower}.
}

\begin{table}[t]
\small
\caption{Summary of results (up to $\bigO(1)$ constants)}
\label{table-1}
\vskip 0.1in
\begin{center}
\begin{tabular}{ |l|c|c|c|}
\hline
\multirow{2}{*} {\textbf{Algorithm}}& \multirow{2}{*}{\textbf{Bias}} & \multirow{2}{*}{\textbf{Var.}} &\textbf{Extra} \\
& & & \textbf{Info.}\\
\hline
\textbf{OGDA} &  \multirow{2}{*}{$1/n$} &  \multirow{2}{*}{$\sigma^2/n$} & \multirow{2}{*}{\xmark} \\
\cite{hsieh2019convergence} & & & \\
\hline
\textbf{Forward-back.Splitting} &  \multirow{2}{*}{$1/n^p$} &  $c^p \sigma^2/n$ & \multirow{2}{*}{\xmark} \\
\cite{rosasco2014stochastic} & & \small{$(c>1)$} & \\
\hline
\textbf{M-GDA \& M-OGDA} & \multirow{2}{*}{$1/n^p$}  & \multirow{2}{*}{$p \sigma^2/n$} & \multirow{2}{*}{\xmark} \\
Corollaries \textbf{\ref{result_general} \& \ref{M-OGDA_AllResults}} & & & \\
\hline
\textbf{M-GDA} & \multirow{2}{*}{$\exp(-n/\Theta(\kappa^2))$} & \multirow{2}{*}{$\sigma^2/n$} & \multirow{2}{*}{$n$}\\
Corollary \textbf{\ref{result_knowing_n}} & & & \\
\hline
\textbf{M-OGDA} & \multirow{2}{*}{$\exp(-n/\Theta(\kappa))$}  & \multirow{2}{*}{$\sigma^2/n$} & \multirow{2}{*}{$n$} \\
Corollary \textbf{\ref{M-OGDA_AllResults}} & & & \\
\hline
\end{tabular}
\end{center}
\end{table}%
\subsection{Notation}
We denote $d\times d$ identity and zero matrices by $\id$ and $0_d$, respectively. Throughout this paper, all vectors are represented as column vectors. The superscript $^\top$ represents the transpose of a vector or a matrix. For two matrices $A \in \mathbb{R}^{m \times n}$ and $B \in \mathbb{R}^{p \times q}$, their Kronecker product is represented by $A\otimes B$. Also, $A \succeq B$ implies that $A - B$ is a symmetric and positive semidefinite matrix.

\section{Preliminaries}\label{prelim}
We first state formally the strong convexity(concavity) and smoothness properties of a function. 
\begin{definition} \label{def:lips_grad}
A convex function $\phi: \reals^n\to\reals$ is $L$-smooth and $\mu$-strongly convex if it satisfies the following two conditions for all $x,\hat{x} \in \mathbb{R}^n$:
\begin{align}
&\Vert \nabla \phi(x) - \nabla \phi(\hat{x}) \Vert \leq L \Vert x - \hat{x} \Vert \label{smooth}\\
&\phi(x)-\phi(\hat{x})-\nabla \phi(\hat{x})^\top (x-\hat{x}) \geq \frac{\mu}{2} \Vert x-\hat{x} \Vert^2 \label{SConvex}
\end{align}
Further, $\phi(x)$ is $\mu$-strongly concave if $-\phi(x)$ is $\mu$-strongly convex.
\end{definition} 
For an $L-$smooth convex function $\phi(\cdot)$, we have the following characterization (see Theorem 2.1.5 in \cite{nesterov_convex}):
\begin{align}
    \phi(x) \leq \phi(\hat{x}) + \nabla \phi(\hat{x})^\top (x - \hat{x}) + \frac{L}{2}\| x - \hat{x} \|^2
    \label{eq:l_smooth_ch}
\end{align}

Throughout the paper, we assume the following: 
\begin{assumption}\label{var_assmp:main}
We assume at iterate $(x,y)$, we have access to $\tilde{\nabla} f_x (x, y, \zeta), \tilde{\nabla} f_y (x, y, \xi)$ which are unbiased estimates of $\nabla f_x (x, y)$ and $\nabla f_y (x, y)$, respectively, i.e.,
\begin{equation}\label{var_assmp:a}
\begin{aligned}
\mathbb{E}\left[\tilde{\nabla} f_x \left(x, y, \zeta\right)\Big|(x,y)\right]= \nabla f_x \left(x,y\right),\\
\mathbb{E}\left[\tilde{\nabla} f_y \left(x, y, \xi \right)\Big|(x,y)\right]= \nabla f_y \left(x,y\right). 
\end{aligned}
\end{equation}
In addition, we assume $\zeta$ and $\xi$ are independent from each other and previous iterates. Moreover, we assume
\begin{equation}\label{var_assmp:b}
\begin{aligned}
\mathbb{E} & \left[\left\Vert \tilde{\nabla} f_x \left(x,y,\zeta \right) - \nabla f_x \left(x,y\right) \right\Vert^{2}\Big|(x,y)\right]\leq\sigma^{2}, \\
\mathbb{E} & \left[\left\Vert \tilde{\nabla} f_y \left(x,y,\xi \right) - \nabla f_y \left(x,y\right) \right\Vert^{2}\Big|(x,y)\right]\leq\sigma^{2}.
\end{aligned}
\end{equation}	
To simplify the notation, we suppress the $\zeta$ and $\xi$ dependence throughout the paper.
\end{assumption}
\begin{assumption}
\label{ass:scsc}
The function $f(x, y)$ is continuously differentiable in $x$ and $y$. For any $\hat{y} \in \reals^n$, $f(x, \hat{y})$ is $L_x$-smooth and $\mu_x$-strongly convex as a function of $x$. Similarly, for any $\hat{x} \in \reals^m$, $f(\hat{x}, y)$ is $L_y$-smooth and $\mu_y$-strongly concave as a function of $y$.

\noindent In addition, the gradient $\nabla_{x}f(x, y)$ is $L_{xy}$-Lipschitz in $y$, i.e., 
\begin{align*}
\| \nabla_{x}f(x, y) - \nabla_{x}f(x, \hat{y})\| &\leq L_{xy}\|y - \hat{y} \| \quad {\forall} \: x \in \reals^m.
\end{align*}
Similarly, the gradient $\nabla_{y}f(x, y)$ is $L_{yx}$-Lipschitz in $x$, i.e., \begin{align*}
\| \nabla_{y}f(x, y) - \nabla_{y}f(\hat{x}, y)\| &\leq L_{yx}\|x - \hat{x} \| \quad {\forall} \: y \in \reals^n.
\end{align*}
\end{assumption}

 Note that this assumption leads to the saddle point $(x^*, y^*)$ being unique and, in addition, we have $\nabla_x f (x^*, y^*) = 0$ and $\nabla_y f (x^*, y^*) = 0$.
\begin{remark}\label{cond_numb_remark}
Under Assumptions \ref{ass:scsc},  we call the function $f(\cdot, \cdot)$ as $L$-smooth and $\mu$-strongly convex- strongly concave where $\mu = \min \{ \mu_x, \mu_y \}$ and $L = \max \{ L_x, L_y, L_{xy}, L_{yx} \}$. We define the condition number of the problem as $\kappa \triangleq L/\mu$.
\end{remark}

We next present some key properties of smooth strongly convex-strongly concave functions that will be used in our analysis. Define:
\begin{align}\label{Phi_def}
    \Phi(z) \triangleq \begin{bmatrix} \nabla_{x} f(x, y) \\ - \nabla_{y} f(x, y) \end{bmatrix}
\end{align}
where $z = (x^\top, y^\top)^\top$ $\in \reals^{m+n}$. For $z = (x^\top, y^\top)^\top, \hat{z} = (\hat{x}^\top, \hat{y}^\top)^\top$, we define 
\begin{align}
    \|z - \hat{z} \|^2 \triangleq \|x - \hat{x} \|^2 + \| y - \hat{y} \|^2.
\end{align}
Also, we define $z^* \triangleq ({x^*}^\top, {y^*}^\top)^\top$ as the unique saddle point. The following lemma follows from the strong convexity and smoothness properties of $f$.
\begin{lemma}
\label{lemma:SCSC_monotone}
Let $z, \hat{z} \in \reals^{m+n}$. Recall the definition of $\Phi$ from \eqref{Phi_def} and $\mu$ and $L$ from Assumption \ref{ass:scsc} and Remark \ref{cond_numb_remark}. Then,
\begin{align}\label{sc_sm_g}
    L \| z - \hat{z} \|^2 \geq \langle \Phi(z)- \Phi(\hat{z}), z - \hat{z} \rangle \: \geq \: \mu \| z - \hat{z} \|^2.
\end{align}
\end{lemma}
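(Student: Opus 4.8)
The plan is to prove the two inequalities in \eqref{sc_sm_g} separately, treating the $x$ and $y$ blocks independently and then summing. Write $z=(x^\top,y^\top)^\top$ and $\hat z=(\hat x^\top,\hat y^\top)^\top$, and expand the inner product using the block structure of $\Phi$ from \eqref{Phi_def}:
\begin{align*}
\langle \Phi(z)-\Phi(\hat z),\, z-\hat z\rangle
= \langle \nabla_x f(x,y)-\nabla_x f(\hat x,\hat y),\, x-\hat x\rangle
- \langle \nabla_y f(x,y)-\nabla_y f(\hat x,\hat y),\, y-\hat y\rangle .
\end{align*}

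For the lower bound, I would introduce the intermediate point $(\hat x, y)$ (or $(x,\hat y)$) and split each of the two terms into a ``diagonal'' part and a ``cross'' part. Concretely, $\langle \nabla_x f(x,y)-\nabla_x f(\hat x,y),\, x-\hat x\rangle \geq \mu_x\|x-\hat x\|^2$ by strong convexity of $f(\cdot,y)$ (monotonicity of the gradient of a $\mu_x$-strongly convex function), and similarly $-\langle \nabla_y f(\hat x,y)-\nabla_y f(\hat x,\hat y),\, y-\hat y\rangle \geq \mu_y\|y-\hat y\|^2$ by strong concavity of $f(\hat x,\cdot)$. The leftover cross terms, $\langle \nabla_x f(x,y)-\nabla_x f(x,\hat y),\, x-\hat x\rangle$ and $-\langle \nabla_y f(x,y)-\nabla_y f(\hat x,y),\, y-\hat y\rangle$, are bounded in absolute value by $L_{xy}\|y-\hat y\|\|x-\hat x\|$ and $L_{yx}\|x-\hat x\|\|y-\hat y\|$ using the Lipschitz-in-the-other-variable hypotheses from Assumption \ref{ass:scsc}. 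This would give a lower bound of the form $\mu_x\|x-\hat x\|^2 + \mu_y\|y-\hat y\|^2 - (L_{xy}+L_{yx})\|x-\hat x\|\|y-\hat y\|$, which is not obviously $\geq \mu\|z-\hat z\|^2$ — here is the main obstacle: the naive cross-term bound is too lossy. The cleaner route is to avoid splitting and instead write the whole inner product directly via the integral/mean-value form $\int_0^1 \langle \nabla^2 \mathcal{L}(z_t)(z-\hat z),\, z-\hat z\rangle\,dt$ along $z_t=\hat z + t(z-\hat z)$, where $\nabla^2\mathcal{L}$ has the block form with $\nabla^2_{xx}f\succeq \mu_x I$, $-\nabla^2_{yy}f\succeq \mu_y I$ on the diagonal and off-diagonal blocks $\nabla^2_{xy}f$, $-\nabla^2_{yx}f=-(\nabla^2_{xy}f)^\top$; the off-diagonal blocks cancel in the quadratic form $v^\top \nabla^2\mathcal{L} v$ since they are transposes up to sign, leaving exactly $\geq \mu_x\|x-\hat x\|^2+\mu_y\|y-\hat y\|^2\geq \mu\|z-\hat z\|^2$.

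For the upper bound I would argue symmetrically: the same quadratic-form computation gives $v^\top\nabla^2\mathcal{L}(z_t)v = (x-\hat x)^\top \nabla^2_{xx}f\,(x-\hat x) - (y-\hat y)^\top\nabla^2_{yy}f\,(y-\hat y) \leq L_x\|x-\hat x\|^2 + L_y\|y-\hat y\|^2 \leq L\|z-\hat z\|^2$, using $\|\nabla^2_{xx}f\|\leq L_x$ and $\|\nabla^2_{yy}f\|\leq L_y$ (which follow from $L_x$- and $L_y$-smoothness in each variable), and then integrating over $t\in[0,1]$. If one prefers to avoid assuming twice differentiability of $f$ (the hypotheses only give $C^1$), I would replace the Hessian argument by the finite-difference telescoping through $(\hat x, y)$: bound $\langle \nabla_x f(x,y)-\nabla_x f(x,\hat y),\, x-\hat x\rangle$ plus the diagonal piece using smoothness $L_x$ and Cauchy--Schwarz with $L_{xy}$, and carefully pair the two cross terms so that they combine into a single $v^\top A v$ form with $A$ skew-symmetric, hence zero — this pairing is the delicate bookkeeping step, and it is where I would spend the most care. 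In either presentation, the two cross terms $L_{xy}\langle \ldots\rangle$ and $-L_{yx}\langle\ldots\rangle$ must be shown to cancel rather than merely be dominated, which is exactly what the symmetry $\nabla^2_{yx}f=(\nabla^2_{xy}f)^\top$ delivers; I would flag this as the conceptual crux and state the lemma's proof cleanly via the second-order characterization, noting the $C^1$ case follows by a standard mollification/approximation argument if needed.
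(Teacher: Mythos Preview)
Your Hessian/integral argument is correct: writing $\langle \Phi(z)-\Phi(\hat z),\,z-\hat z\rangle=\int_0^1 (z-\hat z)^\top J_\Phi(z_t)(z-\hat z)\,dt$ and observing that the symmetric part of $J_\Phi=\begin{bmatrix}\nabla^2_{xx}f & \nabla^2_{xy}f\\ -\nabla^2_{yx}f & -\nabla^2_{yy}f\end{bmatrix}$ is block diagonal with blocks $\nabla^2_{xx}f$ and $-\nabla^2_{yy}f$ does yield both bounds immediately. This is a genuinely different route from the paper's proof, and it is slicker once you assume $C^2$.

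The paper instead gives a purely first-order argument that works directly under the stated $C^1$ hypotheses, with no mollification needed. It never introduces the cross-Lipschitz constants $L_{xy},L_{yx}$ at all. For the lower bound it applies the strong-convexity inequality \eqref{SConvex} twice to $\langle\nabla_x f(x,y),x-\hat x\rangle$ and $-\langle\nabla_x f(\hat x,\hat y),x-\hat x\rangle$, producing the combination $f(x,y)-f(\hat x,y)+f(\hat x,\hat y)-f(x,\hat y)+\mu\|x-\hat x\|^2$; the analogous treatment of the $y$-block via strong concavity yields $-f(x,y)+f(\hat x,y)-f(\hat x,\hat y)+f(x,\hat y)+\mu\|y-\hat y\|^2$. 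Adding, the four function values cancel exactly, leaving $\mu\|z-\hat z\|^2$. The upper bound is identical with the smoothness inequality \eqref{eq:l_smooth_ch} in place of \eqref{SConvex}. So the ``conceptual crux'' you identify---the cancellation of cross terms---is realized in the paper at the level of \emph{function values}, not at the level of Lipschitz-bounded cross-gradient terms combining into a skew-symmetric form. Your $C^1$ sketch in the last paragraph is heading in a harder direction than necessary; the function-value telescoping is the clean first-order mechanism.
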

\begin{proof}
Check Appendix \ref{lemma:SCSC_monotone_proof}.
\end{proof}
Using Lemma \ref{lemma:SCSC_monotone}, we can prove the following result 
\begin{lemma}\label{lem_Phi}
Let $z, \hat{z} \in \reals^{m+n}$. Recall the definition of $\Phi$ from \eqref{Phi_def} and $\mu$ and $L$ from Assumption \ref{ass:scsc} and Remark \ref{cond_numb_remark}. Then,
\begin{align*}
    \langle \Phi(z)- \Phi(\hat{z}), z - \hat{z} \rangle \geq \frac{\mu}{4L^2}\| \Phi(z) - \Phi(\hat{z}) \|^2
\end{align*}
\end{lemma}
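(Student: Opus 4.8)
The plan is to combine the strong-monotonicity lower bound already established in Lemma \ref{lemma:SCSC_monotone} with a Lipschitz-continuity bound on the operator $\Phi$. Write $\Delta z := z - \hat{z}$ and $\Delta \Phi := \Phi(z) - \Phi(\hat{z})$. Lemma \ref{lemma:SCSC_monotone} gives $\langle \Delta\Phi, \Delta z\rangle \geq \mu \|\Delta z\|^2$, so it suffices to show $\|\Delta z\|^2 \geq \frac{1}{4L^2}\|\Delta\Phi\|^2$, i.e.\ that $\Phi$ is $2L$-Lipschitz; then the chain $\langle \Delta\Phi, \Delta z\rangle \geq \mu\|\Delta z\|^2 \geq \frac{\mu}{4L^2}\|\Delta\Phi\|^2$ closes the argument.

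To get the Lipschitz bound, first I would expand $\|\Delta\Phi\|^2 = \|\nabla_x f(x,y) - \nabla_x f(\hat{x},\hat{y})\|^2 + \|\nabla_y f(x,y) - \nabla_y f(\hat{x},\hat{y})\|^2$ using the block structure of $\Phi$ in \eqref{Phi_def} and the norm convention $\|z\|^2 = \|x\|^2 + \|y\|^2$. For the first summand, insert the intermediate point $(\hat{x}, y)$ and apply the triangle inequality together with the $L_x$-smoothness of $f(\cdot,y)$ and the $L_{xy}$-Lipschitz continuity of $\nabla_x f(x,\cdot)$ from Assumption \ref{ass:scsc}, yielding $\|\nabla_x f(x,y) - \nabla_x f(\hat{x},\hat{y})\| \leq L_x\|x-\hat{x}\| + L_{xy}\|y-\hat{y}\| \leq L(\|x-\hat{x}\| + \|y-\hat{y}\|)$. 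Squaring and using $(a+b)^2 \leq 2a^2 + 2b^2$ gives a bound of $2L^2\|\Delta z\|^2$, and the symmetric argument (through the intermediate point $(x,\hat{y})$, using $L_y$ and $L_{yx}$) handles the $y$-block; summing the two gives $\|\Delta\Phi\|^2 \leq 4L^2\|\Delta z\|^2$.

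Substituting this into $\langle \Delta\Phi, \Delta z\rangle \geq \mu\|\Delta z\|^2$ finishes the proof. There is essentially no hard step here; the only thing worth being careful about is the origin of the factor $4$ (rather than $1$) in the constant $\mu/(4L^2)$: it is the price of the two triangle-inequality steps needed to control the cross-block terms, reflecting the fact that for a minimax problem $\Phi$ is only $2L$-Lipschitz with respect to the per-block constants, whereas for a pure minimization problem the analogous inequality would come out as $\mu/L^2$.
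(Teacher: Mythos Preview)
Your argument is correct and follows exactly the paper's route: combine the strong-monotonicity bound $\langle \Phi(z)-\Phi(\hat{z}),z-\hat{z}\rangle \geq \mu\|z-\hat{z}\|^2$ from Lemma~\ref{lemma:SCSC_monotone} with the Lipschitz estimate $\|\Phi(z)-\Phi(\hat{z})\|^2 \leq 4L^2\|z-\hat{z}\|^2$. In fact you supply more detail than the paper, which simply cites the Lipschitz bound without deriving the factor $4$; your triangle-inequality derivation via intermediate points and the constants $L_x,L_y,L_{xy},L_{yx}$ is exactly the justification that step needs.
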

\begin{proof}
Check Appendix \ref{lem_Phi_proof}.
\end{proof}
Using Lemmas \ref{lemma:SCSC_monotone} and \ref{lem_Phi}, we immediately obtain the following result which we state in the form of a matrix inequality since this form is more convenient for subsequent analysis.
\begin{corollary}\label{corollary:Matrix_format}
Let $z \in \reals^{m+n}$. Recall the definition of $\Phi$ from \eqref{Phi_def} and $\mu$ and $L$ from Assumption \ref{ass:scsc} and Remark \ref{cond_numb_remark}. Then,
\begin{align}
    \begin{bmatrix} z-z^* \\ \Phi(z) \end{bmatrix}^{\top}
    \begin{bmatrix}
     \mu & -1 \\ -1 & \mu/(4L^2)    
    \end{bmatrix}
    \begin{bmatrix} z-z^* \\ \Phi(z) \end{bmatrix} \leq 0.
\end{align}
\end{corollary}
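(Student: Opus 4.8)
The plan is to unpack the block matrix inequality into a scalar inequality and then recognize it as the sum of the two estimates already established in Lemmas \ref{lemma:SCSC_monotone} and \ref{lem_Phi}, specialized to $\hat z = z^*$. Concretely, reading the $2\times 2$ matrix as a block operator on $\reals^{m+n}\times\reals^{m+n}$ (so that the entry $\mu$ stands for $\mu I_{m+n}$, the entries $-1$ for $-I_{m+n}$, and $\mu/(4L^2)$ for $(\mu/(4L^2))I_{m+n}$), the claimed inequality is exactly
\begin{align*}
\mu \|z - z^*\|^2 - 2\langle z - z^*, \Phi(z)\rangle + \frac{\mu}{4L^2}\|\Phi(z)\|^2 \leq 0.
\end{align*}
So the whole content is to prove this one line.

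First I would record the key fact, noted right after Assumption \ref{ass:scsc}, that $\Phi(z^*) = 0$, since $\nabla_x f(x^*,y^*) = 0$ and $\nabla_y f(x^*,y^*) = 0$ by definition of the saddle point and the definition \eqref{Phi_def} of $\Phi$. Then I would apply Lemma \ref{lemma:SCSC_monotone} with $\hat z = z^*$: using $\Phi(z^*) = 0$, its lower bound gives $\langle \Phi(z), z - z^*\rangle \geq \mu\|z - z^*\|^2$. Next I would apply Lemma \ref{lem_Phi}, again with $\hat z = z^*$ and $\Phi(z^*) = 0$, to get $\langle \Phi(z), z - z^*\rangle \geq \frac{\mu}{4L^2}\|\Phi(z)\|^2$.

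Adding these two inequalities yields $2\langle \Phi(z), z - z^*\rangle \geq \mu\|z - z^*\|^2 + \frac{\mu}{4L^2}\|\Phi(z)\|^2$, which rearranges precisely to the displayed scalar inequality above, i.e., to the asserted matrix inequality after repacking the terms into the block quadratic form. There is no real obstacle here: the only points requiring a moment's care are (i) correctly interpreting the scalar-looking $2\times 2$ matrix as a block operator acting on the stacked vector $(z-z^*, \Phi(z))$, and (ii) remembering to substitute $\Phi(z^*) = 0$ so that $\Phi(z) - \Phi(z^*) = \Phi(z)$ and $z - z^* = z - \hat z$ line up with the statements of the two lemmas. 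Everything else is a one-line addition, so I would keep the written proof to just these three sentences.
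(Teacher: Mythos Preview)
Your proposal is correct and matches the paper's approach exactly: the paper states that the corollary follows ``immediately'' from Lemmas~\ref{lemma:SCSC_monotone} and~\ref{lem_Phi}, and your write-up is precisely the unpacking of that immediate step---expand the block quadratic form, substitute $\Phi(z^*)=0$, and add the two lemma inequalities specialized to $\hat z=z^*$.
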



\section{Analysis of Stochastic Gradient Descent Ascent Method}
In this section, we study the Stochastic Gradient Descent Ascent (GDA) algorithm, which is given by:
\begin{subequations}\label{GDA_update:main}
\begin{align}
    x_{k+1} &= x_k - \alpha \tnabla_x f(x_k,y_k) \label{GDA_update:a} \\
    y_{k+1} &= y_k + \alpha \tnabla_y f(x_k,y_k). \label{GDA_update:b}
\end{align}
\end{subequations}
This can be succinctly written as:
\begin{align}\label{GDA_update:z}
    z_{k+1} &= z_k - \alpha \tPhi(z_k) 
\end{align}
where $z_{k} = (x_k^\top, y_k^\top)^\top$ and 
\begin{align}\label{stoch_Phi}
    \tPhi(z_k) \triangleq \begin{bmatrix} \tnabla_{x} f(x_k, y_k) \\ - \tnabla_{y} f(x_k, y_k) \end{bmatrix}.
\end{align}
Using this notation, we can represent GDA as a dynamical system as follows:
\begin{align}\label{dynamical_GDA}
z_{k+1} = A z_k + B \tPhi(z_k),
\end{align}
where $A = I_{m+n}, B = -\alpha I_{m+n}$. We study the convergence properties of the sequence $\{z_k\}_k$ through the evolution of the Lyapunov function $V_p(z) = (z-z^*)^\top P (z-z^*)$ where $P = p \otimes I_{m+n}$ with $p \geq 0$ an arbitrary constant. In particular, in the following lemma, we first bound the difference of $\E[V_p(z_{k+1})] - \rho^2 \E[V_p(z_{k})]$ for any $\rho \geq 0$ and $k \geq 0$. We skip the proof as it is very similar to the proof of Lemma B.1 in \cite{MASG}.
\begin{lemma} \label{storage_update}
Let $P = p \otimes I_{m+n}$ with $p \geq 0$ and consider the function $V_p(z)= (z-z^*)^\top P  (z-z^*)$. Then we have
\begin{align}
\label{storage_update_Expec}
&\E[V_p(z_{k+1})] - \rho^2 \E[V_p(z_{k})] \leq 2 \sigma^2 \alpha^2 p + \E\left[\begin{bmatrix} 
	z_k-z^*\\
    \Phi (z_k)
\end{bmatrix}^\top
\begin{bmatrix} 
	A^\top P A - \rho^2 P & A^\top P B\\
    B^\top P A & B^\top P B
\end{bmatrix}
\begin{bmatrix} 
	z_k-z^*\\
    \Phi (z_k)
\end{bmatrix}\right].
\end{align}
\end{lemma}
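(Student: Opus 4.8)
The plan is to expand the quadratic Lyapunov function directly along the dynamical-system recursion \eqref{dynamical_GDA} and then peel off the contribution of the oracle noise using Assumption \ref{var_assmp:main}, exactly in the spirit of the cited Lemma~B.1 of \cite{MASG}.

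First I would set $e_k = z_k - z^*$ and note that, because $A = I_{m+n}$ leaves $z^*$ fixed, \eqref{dynamical_GDA} reads $e_{k+1} = A e_k + B\tPhi(z_k)$. Expanding $V_p(z_{k+1}) = e_{k+1}^\top P e_{k+1}$ and using $P = P^\top$ to merge the two cross terms produces the three pieces $e_k^\top A^\top P A e_k$, $\,2 e_k^\top A^\top P B \tPhi(z_k)$, and $\tPhi(z_k)^\top B^\top P B \tPhi(z_k)$.

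Next I would condition on the iterates up to step $k$. Unbiasedness, i.e.\ \eqref{var_assmp:a}, turns the cross term into $2 e_k^\top A^\top P B \Phi(z_k)$. For the last term I would write $\tPhi(z_k) = \Phi(z_k) + \big(\tPhi(z_k) - \Phi(z_k)\big)$; the mixed term has zero conditional expectation by unbiasedness and independence of the step-$k$ oracle from the past, so what survives is $\Phi(z_k)^\top B^\top P B \Phi(z_k)$ plus $\E\big[(\tPhi(z_k)-\Phi(z_k))^\top B^\top P B(\tPhi(z_k)-\Phi(z_k)) \mid z_k\big]$. Since $B^\top P B = \alpha^2 p\, I_{m+n}$, this last quantity equals $\alpha^2 p\, \E[\|\tPhi(z_k)-\Phi(z_k)\|^2 \mid z_k]$, and since that squared norm is the sum of the squared $x$- and $y$-gradient errors, \eqref{var_assmp:b} bounds it by $2\sigma^2$. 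Subtracting $\rho^2 V_p(z_k) = \rho^2 e_k^\top P e_k$, regrouping the deterministic quadratic terms into the $2\times 2$ block form in $(e_k^\top, \Phi(z_k)^\top)^\top$, and taking total expectation via the tower rule then yields \eqref{storage_update_Expec}.

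There is no real obstacle here; the only care needed is the conditioning bookkeeping — checking that the oracles queried at iteration $k$ are independent of $z_k$, so the mixed noise term genuinely vanishes in conditional expectation and the variance bound applies blockwise — together with the elementary identity $B^\top P B = \alpha^2 p\, I_{m+n}$, which is what reduces the matrix-weighted second moment of the noise to the plain squared norm controlled by Assumption \ref{var_assmp:main}. The hypothesis $p \ge 0$ guarantees that the noise term $2\sigma^2\alpha^2 p$ enters with the correct (nonnegative) sign.
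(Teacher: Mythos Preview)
Your proposal is correct and is precisely the standard Lyapunov-drift computation: the paper itself omits the proof and just points to Lemma~B.1 of \cite{MASG}, which proceeds by exactly the expansion-plus-conditioning argument you outline. There is nothing to add.
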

Next, using this lemma, we characterize the convergence of GDA.
\begin{theorem}\label{Thm_GDA} 
Suppose that the conditions in Assumptions \ref{var_assmp:main} and \ref{ass:scsc} are satisfied. Let $\{z_k\}$ be the iterates generated by GDA \eqref{GDA_update:z} with $0 < \alpha \leq \frac{\mu}{4L^2}$. Then, for any $k \geq 1$, we have
\begin{equation}\label{ineq_GDA_1step}
\E[\|z_{k}-z^*\|^2] \leq  (1- \alpha \mu) \E[\|z_{k-1}-z^*\|^2] + 2 \alpha^2 \sigma^2.
\end{equation}
As a result, the error of GDA after $k$ steps is bounded by
\begin{equation}\label{ineq_GDA_kstep}
\E[\|z_{k}-z^*\|^2] \leq  (1- \alpha \mu)^{k} \E[\|z_{0}-z^*\|^2] + 2 \alpha \sigma^2/\mu.
\end{equation}
\end{theorem}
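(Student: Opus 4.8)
The plan is to feed the GDA dynamical system \eqref{dynamical_GDA} into Lemma \ref{storage_update} with the scalar choice $p=1$, so that $V_p(z)=\norm{z-z^*}^2$, and with the contraction factor $\rho^2 = 1-\alpha\mu$. Since here $A = I_{m+n}$ and $B = -\alpha I_{m+n}$ and $P = I_{m+n}$, the block matrix appearing in \eqref{storage_update_Expec} collapses to
\begin{align*}
\begin{bmatrix} A^\top P A - \rho^2 P & A^\top P B \\ B^\top P A & B^\top P B \end{bmatrix}
= \begin{bmatrix} \alpha\mu & -\alpha \\ -\alpha & \alpha^2 \end{bmatrix} \otimes I_{m+n},
\end{align*}
using $1-\rho^2 = \alpha\mu$. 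So everything reduces to showing that the quadratic form in $(z_k-z^*,\Phi(z_k))$ defined by this matrix is nonpositive, after which \eqref{storage_update_Expec} gives exactly $\E[\norm{z_{k+1}-z^*}^2] \le (1-\alpha\mu)\E[\norm{z_k-z^*}^2] + 2\alpha^2\sigma^2$, which is \eqref{ineq_GDA_1step} up to an index shift.

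The key step is to dominate the collapsed matrix by the one supplied by Corollary \ref{corollary:Matrix_format}. Multiplying that corollary's matrix by $\alpha\ge 0$ and subtracting,
\begin{align*}
\alpha \begin{bmatrix} \mu & -1 \\ -1 & \mu/(4L^2) \end{bmatrix} - \begin{bmatrix} \alpha\mu & -\alpha \\ -\alpha & \alpha^2 \end{bmatrix}
= \begin{bmatrix} 0 & 0 \\ 0 & \alpha\mu/(4L^2) - \alpha^2 \end{bmatrix},
\end{align*}
and this residual is positive semidefinite precisely because of the stepsize restriction $\alpha \le \mu/(4L^2)$. Taking $\otimes\, I_{m+n}$ preserves the semidefinite ordering, so the quadratic form of the collapsed matrix is at most $\alpha$ times the quadratic form of the corollary's matrix, which is $\le 0$ by Corollary \ref{corollary:Matrix_format}. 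This is the only place the hypothesis $\alpha \le \mu/(4L^2)$ is used, and the bound is tight in the sense that the stepsize threshold is exactly what is needed to make the $(2,2)$ entry of the residual nonnegative — so I do not expect any genuine obstacle here, only the observation that the correct Lagrange-type multiplier is $\alpha$ and the correct rate is $1-\alpha\mu$.

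Finally, I would obtain \eqref{ineq_GDA_kstep} by unrolling the one-step recursion \eqref{ineq_GDA_1step}:
\begin{align*}
\E[\norm{z_k-z^*}^2] \le (1-\alpha\mu)^k\, \E[\norm{z_0-z^*}^2] + 2\alpha^2\sigma^2 \sum_{j=0}^{k-1}(1-\alpha\mu)^j .
\end{align*}
Since $L \ge \mu$, the hypothesis gives $\alpha \le \mu/(4L^2) \le 1/(4\mu)$, hence $0 < \alpha\mu < 1$, so the finite geometric sum is bounded by $\sum_{j=0}^{\infty}(1-\alpha\mu)^j = 1/(\alpha\mu)$, turning the second term into $2\alpha^2\sigma^2/(\alpha\mu) = 2\alpha\sigma^2/\mu$. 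Everything past the matrix-domination step is bookkeeping.
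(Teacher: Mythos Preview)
Your proof is correct and follows essentially the same route as the paper: apply Lemma~\ref{storage_update} with $\rho^2=1-\alpha\mu$, dominate the resulting $2\times 2$ block matrix by the one in Corollary~\ref{corollary:Matrix_format} using $\alpha\le \mu/(4L^2)$, and then unroll the recursion with a geometric sum. The only cosmetic difference is that the paper picks $p=1/\alpha$ so the block matrix equals $\begin{bmatrix}\mu & -1\\ -1 & \alpha\end{bmatrix}$ directly, whereas you take $p=1$ and multiply the corollary's inequality by $\alpha$; these are the same computation up to a scalar.
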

\begin{proof}
See Appendix \ref{Thm_GDA_proof}.
\end{proof}
\begin{remark}
It is worth noting that the range for the stepsize $\alpha$ in Theorem \ref{Thm_GDA} is upper bounded by $\mu/4L^2$ (as opposed to just a function of the Lipschitz parameter $L$, as is the case for Gradient Descent in minimization problems). This is consistent with the fact that GDA may diverge when the strong convexity parameter $\mu$ is 0, i.e., the function is convex-concave (see the Bilinear example in \cite{DBLP:conf/iclr/DaskalakisISZ18}).
\end{remark}
\subsection{Tightness of the Results}
In this subsection, we give an example of a function where after running GDA
for $n$ iterations reaches a point which is $\mathcal{O}(1)\text{exp}\left(- \Theta(1) n/\kappa^2 \right)$ close to the saddle point.
Consider the function 
\begin{align}
    f(x,y) = \frac{\mu}{2} x^2 + L xy - \frac{\mu}{2}y^2
    \label{eq:def_quad_fn}
\end{align}
where $x,y \in \mathbb{R}$ and $0 < \mu \leq L$.
The condition number of this function is $\kappa = \frac{L}{\mu}$ and the saddle point of this function is $(x,y) = (0,0)$.
{\begin{example} \label{lemma:quad_example}
Let $\{ x_k, y_k\}$ be the iterates generated by GDA for the objective function given in Equation \eqref{eq:def_quad_fn}. Then, \\
(i) if the gradient at each step is exactly available (i.e. the updates reduce to the deterministic GDA updates), we have:
\begin{align}
     \|x_{k+1}\|^2 + \|y_{k+1}\|^2  &\geq  \left(1 - \frac{1}{\kappa^2} \right)(\|x_{k}\|^2 + \|y_{k}\|^2 )
\end{align}
(ii) if at each step the gradients are corrupted by additive i.i.d. noise with a distribution $\mathcal{N}(0,\sigma^2)$, we have
\begin{align}
    \E[ \| & x_{k+1}\|^2  + \|y_{k+1}\|^2 ] \geq (1-2 \alpha \mu) \E[(\|x_{k}\|^2 + \|y_{k}\|^2)] + 2\alpha^2 \sigma^2
\end{align}
\end{example}}
\begin{proof}
See Appendix \ref{lemma:quad_example_proof}.
\end{proof}
{
Example \ref{lemma:quad_example}(i) shows that in order to find the saddle point of the function $f$ defined in equation \eqref{eq:def_quad_fn}, we need to run at least $\mathcal{O}(\kappa^2 \log(1/\epsilon))$ steps of GDA (i.e. the deterministic case) to reach a point which is $\epsilon$-close to the solution, showing that this dependence on $\kappa$ cannot be improved.
Example \ref{lemma:quad_example}(ii) shows that when the gradients are corrupted by noise with variance $\sigma^2$, GDA reaches an $\mathcal{O}(\alpha)$ neighborhood of the saddle point and this dependence on $\alpha$ cannot be improved.}

\section{A Multistage Stochastic Gradient Descent Ascent Method (M-GDA)}\label{sec:MGDA}
Our result in Theorem \ref{Thm_GDA} shows that for GDA with constant stepsize $\alpha$, the iterates converge to an $\bigO(\alpha)$ neighborhood of the saddle point. In this section, we introduce a new method which is a variant of GDA with progressively decreasing stepsize that converges to the exact unique saddle point of problem \ref{main_prob}. Our proposed algorithm, Multistage Stochastic Gradient Descent Ascent (M-GDA), which is presented in Algorithm \ref{Algorithm1}, runs in several stages where each stage is the GDA method with constant stepsize. In what follows, we show our multistage method with a {carefully chosen learning rate and step length evolution}
achieves linear decay in the bias term as well as optimal variance dependence without any knowledge of the noise properties.
\begin{algorithm}[tb]
    \caption{Multistage Stochastic Gradient Descent Ascent Algorithm (M-GDA)}
    \label{Algorithm1} 
    \begin{algorithmic}
    \STATE {\bfseries Input:}Initial points $x_0^0, y_0^0$, the stepsize sequence $\{\al_k\}_{k=1}^K$, the stage-length sequence $\{n_k\}_{k=1}^K$.
    \STATE Set $n_0 = 0$;
    \FOR{$k = 1;\ k \leq K;\ k = k + 1$}
    \STATE Set $x_0^k = x_{n_{k-1}}^{k-1}$ and $y_0^k = y_{n_{k-1}}^{k-1}$;
    \FOR{$m = 0;\ m < n_k;\ m = m + 1$}
    \STATE Set $x_{m+1}^k = x_m^k - \alpha_k \tilde{\nabla_x} f(x_{m}^k, y_{m}^k)$ 
    \STATE Set $y_{m+1}^k = y_m^k + \alpha_k \tilde{\nabla_y} f(x_{m}^k, y_{m}^k)$
    \ENDFOR
    \ENDFOR
    \end{algorithmic}
\end{algorithm}
\sloppy
\begin{theorem}\label{main_result}
Suppose that the conditions in Assumptions \ref{var_assmp:main} and \ref{ass:scsc} are satisfied. Let $\{ (x_m^k,y_m^k)_{m=0}^{n_k} \}_{k=1}^K$ be the iterates generated by M-GDA (Algorithm \ref{Algorithm1}) with the following parameters
\begin{equation}\label{M-GDA_parametres}
\begin{aligned}
\alpha_1 &= \frac{\mu}{4L^2}, n_1 \geq 1 \\
\alpha_k &= \frac{\mu}{L^2 2^{k+2}}, n_k = \ceil{p 2^{k+2} \kappa^2 \log(2)} \quad (k \geq 2),	
\end{aligned}
\end{equation}
where $p \geq 2$ is an arbitrary positive number. Then, for any $k \geq 1$, we have	
\begin{align*}
\E \left [ \| z_{n_k}^k - z^* \|^2 \right ] & \leq \frac{\exp\left (-n_1/(4 \kappa^2)\right) }{2^{p(k-1)}} \|z_0 - z^*\|^2 + \frac{\sigma^2}{2^k L^2}
\end{align*}
where $z_m^k = ((x_m^k)^\top, (y_m^k)^\top)^\top$ for any $0 \leq m \leq n_k$.
\end{theorem}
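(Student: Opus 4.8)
My plan is to prove the bound by induction on the stage index $k$, using Theorem~\ref{Thm_GDA} to control the progress within each individual stage. The key observation is that stage $k$ is just GDA run for $n_k$ steps with constant stepsize $\alpha_k$, started from $z_0^k = z_{n_{k-1}}^{k-1}$, so inequality \eqref{ineq_GDA_kstep} applies directly:
\begin{align*}
\E[\|z_{n_k}^k - z^*\|^2] \leq (1 - \alpha_k \mu)^{n_k} \E[\|z_0^k - z^*\|^2] + \frac{2\alpha_k \sigma^2}{\mu}.
\end{align*}
One must first check that each $\alpha_k$ satisfies the admissibility condition $\alpha_k \leq \mu/(4L^2)$ from Theorem~\ref{Thm_GDA}; for $k=1$ this is equality, and for $k \geq 2$ we have $\alpha_k = \mu/(L^2 2^{k+2}) \leq \mu/(16 L^2) \leq \mu/(4L^2)$, so this is fine.

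The heart of the argument is bookkeeping on the two pieces. For the \emph{contraction factor}, I would use $(1-\alpha_k\mu)^{n_k} \leq \exp(-\alpha_k \mu n_k)$, and then plug in the chosen parameters: for $k \geq 2$, $\alpha_k \mu n_k \geq \frac{\mu^2}{L^2 2^{k+2}} \cdot p 2^{k+2}\kappa^2 \log 2 = p \log 2$, so the contraction factor for stage $k\geq 2$ is at most $2^{-p}$; for stage $1$ it is at most $\exp(-\alpha_1 \mu n_1) = \exp(-n_1/(4\kappa^2))$. For the \emph{additive noise term}, $2\alpha_k\sigma^2/\mu = \sigma^2/(2^{k+1} L^2)$ for $k\geq 2$ (and $\sigma^2/(2L^2)$ for $k=1$). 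Now I would set up the induction: the inductive hypothesis is precisely the claimed bound at stage $k-1$; applying the one-stage inequality with the stage-$k$ parameters multiplies the bias part by $2^{-p}$ (giving the telescoping factor $2^{-p(k-1)}$, since stage $1$ contributes the $\exp(-n_1/(4\kappa^2))$ factor and there are $k-1$ subsequent stages each contributing $2^{-p}$), while the accumulated variance satisfies a recursion $V_k \leq 2^{-p} V_{k-1} + \sigma^2/(2^{k+1}L^2)$.

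The one genuinely delicate step is summing the variance contributions to get the clean bound $\sigma^2/(2^k L^2)$. Unrolling the recursion gives $V_k \leq \sum_{j=2}^{k} 2^{-p(k-j)} \frac{\sigma^2}{2^{j+1}L^2} + 2^{-p(k-1)}\cdot\frac{\sigma^2}{2L^2}$; I would bound this by $\frac{\sigma^2}{2^{k+1}L^2}\sum_{j\geq 0} 2^{j}2^{-pj} + \dots = \frac{\sigma^2}{2^{k+1}L^2} \cdot \frac{1}{1 - 2^{1-p}} + \dots$, and here the hypothesis $p \geq 2$ is exactly what makes $2^{1-p} \leq 1/2$ so that the geometric series is at most $2$, yielding $V_k \leq \sigma^2/(2^k L^2)$ after also absorbing the stage-$1$ leftover. (The $p\geq 2$ requirement, rather than merely $p>1$, is what buys the extra slack to fold in the first-stage variance term and still land on the stated constant.) Everything else is routine: the within-stage estimate is quoted verbatim from Theorem~\ref{Thm_GDA}, and the telescoping of the bias factor is immediate once the per-stage contraction bounds are in hand. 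I would present it as: (1) verify stepsize admissibility; (2) state the per-stage inequality and evaluate the contraction and noise terms on the given parameters; (3) run the induction, separating bias and variance; (4) sum the variance geometric series using $p\geq 2$.
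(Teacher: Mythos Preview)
Your proposal is correct and follows essentially the same approach as the paper: induction on the stage index $k$, applying Theorem~\ref{Thm_GDA} within each stage, using $(1-\alpha_k\mu)^{n_k}\le \exp(-\alpha_k\mu n_k)$ to get the per-stage contraction factors $\exp(-n_1/(4\kappa^2))$ and $2^{-p}$, and invoking $p\ge 2$ to control the accumulated variance. The only cosmetic difference is that the paper verifies the variance bound directly inside the inductive step (checking $\tfrac{1}{2^{k+2}}+\tfrac{1}{2^{k+p}}\le \tfrac{1}{2^{k+1}}$), whereas you unroll the recursion into a geometric series; both routes are equivalent and close using exactly $p\ge 2$.
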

\begin{proof}
We prove the result by induction on $k$. To simplify the notation, we define $\err_k := \E\left [\| z_{n_k}^k - z^* \|^2 \right ]$. 

First, and for $k=1$, note that, by using Theorem \ref{Thm_GDA} along with the fact that $1- \alpha \mu \leq \exp(-\alpha \mu)$, we have:
\begin{align}
\err_1 & \leq \exp(- \alpha_1 n_1 \mu) \|z_0-z^*\|^2 + 2 \alpha_1 \sigma^2/\mu  = \exp(-\frac{n_1}{4\kappa^2}) \|z_0-z^*\|^2 + \frac{\sigma^2}{2L^2},   
\end{align}
where we plugged in $\alpha_1 = \frac{\mu}{4L^2}, n_1 \geq 1$ to obtain the last equality. Hence, the result holds for $k=1$. Now, assume the result holds for $k$, and we show it for $k+1$. Note that, Theorem \ref{Thm_GDA} for stage $k+1$ yields:
\begin{align}\label{ineq1_main_result}
\err_{k+1} & \leq \exp(- \alpha_{k+1} n_{k+1} \mu) \err_k + 2 \alpha_{k+1} \sigma^2/\mu \\
& \leq \exp(-p \log(2)) \err_k + \frac{\sigma^2}{2^{k+2}L^2}, 
\end{align}
where we used $\alpha_{k+1} = \mu/(L^2 2^{k+3})$ and $n_{k+1} \geq p 2^{k+3} \kappa^2 \log(2)$ to derive the last inequality. Now, note that, by induction hypothesis, we have
\begin{equation}
\err_k \leq \frac{\exp\left (-n_1/(4 \kappa^2)\right) }{2^{p(k-1)}} \|z_0 - z^*\|^2 + \frac{\sigma^2}{2^k L^2}.    
\end{equation}
Substituting this bound in \eqref{ineq1_main_result}, we obtain
\begin{align}
\err_{k+1} &\leq  \frac{\exp\left (-n_1/(4 \kappa^2)\right) }{2^{pk}} \|z_0 - z^*\|^2  + \frac{\sigma^2}{L^2} (\frac{1}{2^{k+2}} + \frac{1}{2^{k+p}}) \\
&\leq \frac{\exp\left (-n_1/(4 \kappa^2)\right) }{2^{pk}} \|z_0 - z^*\|^2 + \frac{\sigma^2}{2^{k+1}L^2}
\end{align}
where the last bound follows from $p \geq 2$. This completes the proof.
\end{proof}
The above theorem provides an upper bound on the distance of the last iterate of each stage to the saddle point of problem \eqref{main_prob}. Using this result, and in the following corollary, we provide an upper bound on the distance of any iterate from the saddle point. Before stating this corollary, let $\{z_n\}_n$ be the sequence which is obtained by concatenating the $\{ (x_m^k,y_m^k)_{m=0}^{n_k} \}_{k=1}^K$ sequences, i.e., 
\begin{equation}\label{concatenate}
z_n = \left ( \left (x_{n-\sum_{i=1}^{k-1} n_i}^{k} \right )^\top, \left (y_{n-\sum_{i=1}^{k-1} n_i}^{k} \right )^\top \right )^\top \quad \text{for } \sum_{i=1}^{k-1} n_i < n \leq \sum_{i=1}^{k} n_i.
\end{equation}
\begin{corollary}\label{cor_anyN}
Suppose that the conditions in Assumptions \ref{var_assmp:main} and \ref{ass:scsc} are satisfied. Consider running M-GDA (Algorithm \ref{Algorithm1}) with the parameters given in Theorem \ref{main_result}. Also, recall the definition of the concatenated sequence $\{ z_n \}$ from \eqref{concatenate}. Then, for any $n > n_1$, we have
\begin{align}
 \E \left [ \| z_n - z^* \|^2 \right]  & \leq \bigO(1) \left ( \frac{\exp\left (-n_1/(4 \kappa^2)\right) }{\left ((n-n_1)/ (\kappa^2 p) \right )^p} \|z_0 - z^*\|^2   + \frac{p\sigma^2}{(n-n_1) \mu^2} \right ). 
\end{align}
\end{corollary}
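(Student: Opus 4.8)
The plan is to bound an arbitrary concatenated iterate $z_n$ by reducing to the end-of-stage estimate already established in Theorem \ref{main_result}, losing only constant factors. First, since $n > n_1$, there is a unique stage index $k \ge 2$ with $N_{k-1} < n \le N_k$, where $N_j := \sum_{i=1}^{j} n_i$; then $z_n = z_m^k$ with $m = n - N_{k-1} \in \{1,\dots,n_k\}$. Within stage $k$ the iterates are exactly GDA run with the constant stepsize $\alpha_k \le \mu/(4L^2)$ and initialized at $z_0^k = z_{n_{k-1}}^{k-1}$, so applying the $k$-step bound \eqref{ineq_GDA_kstep} of Theorem \ref{Thm_GDA} with $m$ steps and stepsize $\alpha_k$, using $(1-\alpha_k\mu)^m \le 1$, and writing $\err_{k-1} := \E[\|z_0^k - z^*\|^2]$ for the quantity controlled in Theorem \ref{main_result}, I would get
\[
\E[\|z_n - z^*\|^2] \;\le\; \err_{k-1} + \frac{2\alpha_k\sigma^2}{\mu}.
\]

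Second, I would substitute the bound on $\err_{k-1}$ from Theorem \ref{main_result} (valid since $k-1 \ge 1$) together with the identity $2\alpha_k\sigma^2/\mu = \sigma^2/(2^{k+1}L^2)$, and merge the two $\sigma^2/L^2$ contributions, obtaining (up to an absolute factor on the variance term)
\[
\E[\|z_n-z^*\|^2] \;\le\; \exp\!\big(-n_1/(4\kappa^2)\big)\,2^{-p(k-2)}\,\|z_0-z^*\|^2 \;+\; \frac{\sigma^2}{2^{k-2}L^2}.
\]
It then remains to convert the exponent $k$ into $n$. Using $n_i \le p\,2^{i+2}\kappa^2\log 2 + 1$ for $i \ge 2$, the geometric sum $\sum_{i=2}^{k} 2^{i+2} < 2^{k+3}$, and the crude bound $k-1 \le 2^k \le \tfrac12 p\kappa^2\, 2^k$ (since $p\kappa^2 \ge 2$), one gets $n - n_1 \le \sum_{i=2}^{k} n_i \le C\,p\kappa^2\,2^k$ for an absolute constant $C$; that is, the stage sum is, up to constants, dominated by its last term $n_k$. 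Hence $2^k \ge (n-n_1)/(C p\kappa^2)$, so $2^{-(k-2)} \le 4C\,p\kappa^2/(n-n_1)$.

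Finally, substituting this into the two terms and using $\kappa^2/L^2 = 1/\mu^2$ turns the variance term into $\bigO(1)\,p\sigma^2/((n-n_1)\mu^2)$ and, after raising $2^{-(k-2)}$ to the $p$-th power, turns the bias term into $\bigO(1)\,\exp(-n_1/(4\kappa^2))\,\|z_0-z^*\|^2\big/\big((n-n_1)/(\kappa^2 p)\big)^p$, which is exactly the claimed bound. The only genuine obstacle is the bookkeeping in the stage-sum estimate of the third paragraph: carefully accounting for the ceilings in the definition of $n_k$, and verifying that both the additive ``$+1$'' slack per stage and the shift $k \mapsto k-2$ in the exponent cost only absolute constants — for the bias a constant of the form $c^p$, which is acceptable since $p$ is a fixed algorithm parameter — so that everything can be folded into the $\bigO(1)$.
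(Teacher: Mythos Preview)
Your proposal is correct and follows essentially the same route as the paper: reduce $z_n$ to the last completed end-of-stage iterate via Theorem~\ref{Thm_GDA} (with the contraction factor crudely bounded by $1$), invoke Theorem~\ref{main_result} for that end-of-stage error, and then translate the stage index into $n$ by summing the geometric series $\sum n_i$. The only cosmetic differences are that the paper indexes the current stage as $k{+}1$ rather than $k$, and handles the ceilings via $\lceil x\rceil\le 2x$ instead of your $\lceil x\rceil\le x+1$; both yield the same $2^k \ge \Theta(1)(n-n_1)/(p\kappa^2)$ and the same absorbed $c^p$ constant in the bias.
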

\begin{proof}
Check Appendix \ref{Proof_cor_anyN}	
\end{proof}
We interpret this result in two different regimes. First, we consider the case where we are given a fixed budget of $n$ iterations. In this case, the following corollary shows how we can tune the parameters to obtain linear decay in the bias term as well as $\bigO(1/n)$ reduction in the variance term. We omit the proof as it is an immediate application of Corollary \ref{cor_anyN}.
\begin{corollary}\label{result_knowing_n}
Suppose that the conditions in Assumptions \ref{var_assmp:main} and \ref{ass:scsc} are satisfied. Consider running M-GDA (Algorithm \ref{Algorithm1}) with the parameters given in Theorem \ref{main_result} and $p=2, n_1= \frac{n}{C}$ with $C \geq 2$.  Also, recall the definition of the concatenated sequence $\{ z_n \}$ from \eqref{concatenate}. Then, for any $n \geq 2 \kappa^2$, we have
\begin{align}
 \E \left [ \| z_n - z^* \|^2 \right]  & \leq \bigO(1) \left ( \exp\left (-\Theta(1) n/\kappa^2\right)\|z_0 - z^*\|^2   + \frac{\sigma^2}{n \mu^2} \right ). 
\end{align}
\end{corollary}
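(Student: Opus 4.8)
The plan is to derive Corollary~\ref{result_knowing_n} directly from Corollary~\ref{cor_anyN} by substituting the specific parameter choices $p = 2$ and $n_1 = n/C$ with $C \geq 2$, and then simplifying each of the two terms. First I would note that by Corollary~\ref{cor_anyN}, for any $n > n_1$,
\begin{align*}
\E\left[\|z_n - z^*\|^2\right] \leq \bigO(1)\left(\frac{\exp\left(-n_1/(4\kappa^2)\right)}{\left((n-n_1)/(\kappa^2 p)\right)^p}\|z_0 - z^*\|^2 + \frac{p\sigma^2}{(n-n_1)\mu^2}\right),
\end{align*}
so it suffices to bound the bias factor $\exp(-n_1/(4\kappa^2))/\left((n-n_1)/(\kappa^2 p)\right)^p$ by $\bigO(1)\exp(-\Theta(1)n/\kappa^2)$ and the variance factor $p/(n-n_1)$ by $\bigO(1)/n$, under the standing assumption $n \geq 2\kappa^2$.

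For the variance term, since $n_1 = n/C$ with $C \geq 2$, we have $n - n_1 = n(1 - 1/C) \geq n/2$, so $p\sigma^2/((n-n_1)\mu^2) \leq 4\sigma^2/(n\mu^2) = \bigO(1)\sigma^2/(n\mu^2)$; this step is immediate. For the bias term, the key observation is that the $\exp(-n_1/(4\kappa^2)) = \exp(-n/(4C\kappa^2))$ factor already gives the desired $\exp(-\Theta(1)n/\kappa^2)$ decay, and the remaining job is to show the denominator $\left((n-n_1)/(\kappa^2 p)\right)^p = \left((n-n_1)/(2\kappa^2)\right)^2$ is at least a positive constant (so that $1$ over it is $\bigO(1)$), which follows from $n - n_1 \geq n/2 \geq \kappa^2$, hence $(n-n_1)/(2\kappa^2) \geq 1/2$, so the denominator is at least $1/4$. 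Combining, the bias factor is at most $4\exp(-n/(4C\kappa^2)) = \bigO(1)\exp(-\Theta(1)n/\kappa^2)$.

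The only mild subtlety — and the closest thing to an obstacle — is bookkeeping around the condition $n \geq 2\kappa^2$: one must verify that this is exactly what is needed to make $(n-n_1)/(2\kappa^2)$ bounded below by a constant (equivalently, that the polynomial denominator does not shrink to zero), and that the choice $C \geq 2$ guarantees $n > n_1$ so that Corollary~\ref{cor_anyN} applies in the first place. Once these elementary inequalities are in place, the two bounds combine to give exactly
\begin{align*}
\E\left[\|z_n - z^*\|^2\right] \leq \bigO(1)\left(\exp\left(-\Theta(1)n/\kappa^2\right)\|z_0 - z^*\|^2 + \frac{\sigma^2}{n\mu^2}\right),
\end{align*}
which is the claim. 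Since every step is a routine substitution and simplification, the authors' remark that the proof is ``an immediate application of Corollary~\ref{cor_anyN}'' is justified, and I would present it in just a few lines rather than as a formal displayed argument.
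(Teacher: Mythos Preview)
Your proposal is correct and matches the paper's approach exactly: the paper omits the proof, stating only that it is ``an immediate application of Corollary~\ref{cor_anyN},'' and you have spelled out precisely that immediate application by substituting $p=2$, $n_1=n/C$, and using $n-n_1 \geq n/2 \geq \kappa^2$ to control both terms.
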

Finally, in the following corollary, we illustrate how our results can be applied to the case where we do not know the number of iterations in advance.
\begin{corollary}\label{result_general}
Suppose that the conditions in Assumptions \ref{var_assmp:main} and \ref{ass:scsc} are satisfied. Consider running M-GDA (Algorithm \ref{Algorithm1}) with the parameters given in Theorem \ref{main_result} and $n_1 = \ceil{4 p \kappa^2 \log(p \kappa^2)}$ for an arbitrary $p \geq 2$.  Also, recall the definition of the concatenated sequence $\{ z_n \}$ from \eqref{concatenate}. Then, for any $n \geq 2 n_1$, we have
\begin{align}
 \E \left [ \| z_n - z^* \|^2 \right]  & \leq \bigO(1) \left ( 1/n^p \|z_0 - z^*\|^2 + \frac{p\sigma^2}{n \mu^2} \right ). 
\end{align}
\end{corollary}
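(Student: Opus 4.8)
The plan is to derive Corollary \ref{result_general} directly from Corollary \ref{cor_anyN} by substituting the prescribed value $n_1 = \ceil{4 p \kappa^2 \log(p \kappa^2)}$ and simplifying. First I would check that this choice is admissible: since $p \geq 2$ and $\kappa = L/\mu \geq 1$, we have $p\kappa^2 \geq 2 > 1$, so $\log(p\kappa^2) > 0$ and $n_1 \geq 1$, which is what Theorem \ref{main_result} (and hence Corollary \ref{cor_anyN}) requires; moreover the hypothesis $n \geq 2n_1$ gives $n > n_1$, so Corollary \ref{cor_anyN} applies.

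Next I would treat the bias term. From $n_1 \geq 4p\kappa^2\log(p\kappa^2)$ we get $n_1/(4\kappa^2) \geq p\log(p\kappa^2)$, hence
\[
\exp\!\left(-\frac{n_1}{4\kappa^2}\right) \leq (p\kappa^2)^{-p}.
\]
Multiplying this by the factor $\bigl((n-n_1)/(\kappa^2 p)\bigr)^{-p}$ that appears in the bias term of Corollary \ref{cor_anyN}, the $\kappa$ and $p$ contributions cancel exactly, leaving the bias term bounded by $(n-n_1)^{-p}\|z_0-z^*\|^2$.

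Then I would invoke $n \geq 2n_1$, which gives $n-n_1 \geq n/2$. This turns the bias bound into $(2/n)^p\|z_0-z^*\|^2$ and the variance term $p\sigma^2/((n-n_1)\mu^2)$ into at most $2p\sigma^2/(n\mu^2)$. Treating $p$ as a fixed constant, $2^p$ is absorbed into the $\bigO(1)$ prefactor of the bias and the factor $2$ into that of the variance, yielding
\[
\E[\|z_n - z^*\|^2] \leq \bigO(1)\left(\frac{1}{n^p}\|z_0-z^*\|^2 + \frac{p\sigma^2}{n\mu^2}\right),
\]
which is the claim.

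There is no genuine obstacle here beyond bookkeeping; the one point worth emphasizing is that the value of $n_1$ is calibrated precisely so that the decay $\exp(-n_1/(4\kappa^2))$ exactly cancels the $(\kappa^2 p)^p$ blow-up coming from the denominator $((n-n_1)/(\kappa^2 p))^p$ in Corollary \ref{cor_anyN} — this is why $\log(p\kappa^2)$, rather than just $\log(\kappa^2)$, shows up in the definition of $n_1$ — and that the requirement $n \geq 2n_1$ is exactly what is needed to replace $n-n_1$ by $\Theta(n)$ in both terms. One should also keep in mind the convention that $p$ is a constant, so that $2^p$ may be hidden inside $\bigO(1)$ while $p$ itself is retained in the variance term to display the dependence explicitly.
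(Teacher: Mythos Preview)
Your proposal is correct and matches the paper's intended argument: the paper does not spell out a separate proof for this corollary, treating it (like Corollary \ref{result_knowing_n}) as an immediate specialization of Corollary \ref{cor_anyN}, and your derivation is exactly that specialization. Your observation that the choice $n_1 = \ceil{4p\kappa^2\log(p\kappa^2)}$ is calibrated so that $\exp(-n_1/(4\kappa^2)) \leq (p\kappa^2)^{-p}$ exactly cancels the $(\kappa^2 p)^p$ factor, together with $n-n_1 \geq n/2$, is precisely the computation the paper has in mind.
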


It is worth noting that the results in Corollaries \ref{result_knowing_n} and \ref{result_general} are presented in terms of the $n^{th}$ iterate $z_n$ which is obtained by concatenating the iterates of all stages, including inner iterations (as given in \eqref{concatenate}). In fact, while it is true that the number of inner stage iterations increases, the bounds in Table \ref{table-1} and Corollaries \ref{result_knowing_n} and \ref{result_general}, are all based on the total number of iterations, and therefore, they take into account the inner stage iterations. 
\section{A Multistage Stochastic Optimistic Gradient Descent Ascent Method (M-OGDA)}\label{sec:MOGDA}
As we showed in previous Section, M-GDA achieves the optimal variance rate $\bigO(\sigma^2/n)$ as well as linear decay $\bigO(1)\exp(-\Theta(1) n/\kappa^2)$ in the bias term. However, the dependence of the latter to condition number $\kappa$ is suboptimal compared to the lower bound presented in \cite{ibrahim2019lower}. Therefore, a natural question is whether we can design an algorithm which matches the lower bound for the bias term while simultaneously enjoying the optimal variance decay. In this section, we show that this is possible, and we do so by applying the multistage machinery to the stochastic Optimistic Gradient Descent Ascent (OGDA) algorithm. In this section, we first revisit the existing results on convergence of stochastic OGDA method, and next, show how its multistage version (M-OGDA) can matches both lower bounds simultaneously. 

The stochastic OGDA method is given by:
\begin{align*}
    x_{k+1} &= x_k - 2\alpha \tnabla_x f(x_k,y_k) + \alpha \tnabla_x f(x_{k-1},y_{k-1}) \\ 
    y_{k+1} &= y_k + 2 \alpha \tnabla_y f(x_k,y_k) - \alpha \tnabla_y f(x_{k-1},y_{k-1}) 
\end{align*}
which can also be written as:
\begin{align}
    z_{k+1} = z_k - 2\alpha \tPhi(z_k) + \alpha \tPhi(z_{k-1})
\end{align}
where recall that $z_{k} = (x_k^\top,y_k^\top)^\top$, $\alpha$ is the stepsize, and $\tPhi(\cdot)$ are the stochastic gradients (unbiased estimates of the true gradients) \eqref{stoch_Phi}. 
The OGDA updates have been observed to permorm well empirically for training GANs (see \cite{DBLP:conf/iclr/DaskalakisISZ18}) and have been proved to converge for convex-concave problems (see \cite{mokhtari2019unified, hsieh2019convergence}) which is not true for GDA. 
As shown in \cite{GidelBVVL19, hsieh2019convergence}, the OGDA updates can also be thought of as a single call version of the Extragradient method (since it reuses a gradient from the past) and using this interpretation, the OGDA algorithm can also be written as follows\footnote{In this section, we use the variables w, z to represent the concatenation of the variables $(x,y)$ of the original problem, i.e., $(x^\top, y^\top)^\top$ in order to maintain brevity.}:
\begin{subequations}\label{OGDA_update:main}
\begin{align}
    z_{k+1} &= w_k - \alpha \tPhi(z_k) \label{OGDA_update:a} \\
    w_{k+1} &= w_k - \alpha \tPhi(z_{k+1}) \label{OGDA_update:b}
\end{align}
\end{subequations}
Note that the difference from Extragradient (EG) is that in EG, the update for $z_{k+1}$ involves the gradient at $w_k$ whereas here we use the gradient at $z_k$ instead. We will use this form of the Stochastic OGDA updates for our analysis. From the analysis  \footnote{The analysis in \cite{hsieh2019convergence} is for the case of additive noise. However, it can be easily extended to our setting by conditioning and using the tower rule on the current iterate.} of Theorem $5$ in \cite{hsieh2019convergence}, we have the following result for Stochastic OGDA:
\begin{theorem} (\cite{hsieh2019convergence})\label{Thm_OGDA} 
Suppose that the conditions in Assumptions \ref{var_assmp:main} and \ref{ass:scsc} are satisfied. Let $\{z_k, w_k\}_k$ be the iterates generated by Stochastic OGDA with $0 < \alpha \leq \frac{1}{8L}$. Then, for any $k \geq 1$, we have
\begin{align*}\label{ineq_OGDA_1step}
\E[ & \|w_{k}-z^*\|^2] + \alpha^2 \E[\|\tPhi(z_{k}) - \tPhi(z_{k-1})\|^2] \nonumber \\
&\leq  (1- \alpha \mu) \left(\E[\|w_{k-1}-z^*\|^2]  + \alpha^2 \E[\|\tPhi(z_{k-1}) - \tPhi(z_{k-2})\|^2] \right ) + 6 \alpha^2 \sigma^2.
\end{align*}
\end{theorem}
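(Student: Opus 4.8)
The plan is to imitate the one-step GDA analysis (Theorem \ref{Thm_GDA}) but applied to the OGDA dynamics rewritten in the Extragradient-style form \eqref{OGDA_update:main}, with the appropriate Lyapunov function $\mathcal{V}_k := \E[\|w_k - z^*\|^2] + \alpha^2 \E[\|\tPhi(z_k) - \tPhi(z_{k-1})\|^2]$. First I would condition on the iterates up through step $k-1$ and take expectations over the fresh noise at step $k$, using Assumption \ref{var_assmp:main} to split each stochastic quantity into its mean plus a zero-mean noise term with variance at most $\sigma^2$; the independence of $\zeta,\xi$ from the past lets the cross terms vanish and produces the additive $\bigO(\alpha^2 \sigma^2)$ contribution. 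Concretely, writing $\tPhi(z_k) = \Phi(z_k) + \eta_k$ with $\E[\eta_k \mid \mathcal{F}_{k-1}] = 0$, $\E[\|\eta_k\|^2 \mid \mathcal{F}_{k-1}] \le 2\sigma^2$ (summing the $x$- and $y$-variances), the update $z_{k+1} = w_k - \alpha\tPhi(z_k)$ and $w_{k+1} = w_k - \alpha\tPhi(z_{k+1})$ can be expanded into the deterministic OGDA recursion plus noise.

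Next I would invoke the deterministic contraction estimate underlying Theorem 5 of \cite{hsieh2019convergence}: for the noiseless OGDA/single-call-EG dynamics with $\alpha \le 1/(8L)$ one has $\|w_k - z^*\|^2 + \alpha^2\|\Phi(z_k)-\Phi(z_{k-1})\|^2 \le (1-\alpha\mu)\big(\|w_{k-1}-z^*\|^2 + \alpha^2\|\Phi(z_{k-1})-\Phi(z_{k-2})\|^2\big)$. The derivation of this uses strong monotonicity and co-coercivity of $\Phi$ — precisely Lemmas \ref{lemma:SCSC_monotone} and \ref{lem_Phi} (equivalently Corollary \ref{corollary:Matrix_format}) — together with Lipschitzness of $\Phi$ to control the error between $\Phi(z_k)$ and $\Phi(w_k)$; the stepsize restriction $\alpha \le 1/(8L)$ is what makes all the quadratic-form bookkeeping close with the factor $(1-\alpha\mu)$. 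I would then combine the deterministic contraction with the noise decomposition: after taking full expectations, the noise terms $\eta_k$ (and the stale $\eta_{k-1}$ appearing through $z_k = w_{k-1} - \alpha\tPhi(z_{k-1})$) are independent of the relevant deterministic parts, so their cross-products drop and only their second moments survive, each bounded by $2\sigma^2$; careful accounting of how many such terms appear (the $w$-update contributes one, the gradient-difference term contributes a couple more) yields the constant $6$ in $6\alpha^2\sigma^2$.

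The main obstacle I anticipate is bookkeeping the noise correctly through the two-variable $(z_k, w_k)$ recursion: because $z_k$ already depends on the noise $\eta_{k-1}$ used to form it, one must be careful about which filtration each term is measurable with respect to, and about whether $\E[\|\tPhi(z_k)-\tPhi(z_{k-1})\|^2]$ versus $\E[\|\Phi(z_k)-\Phi(z_{k-1})\|^2]$ is the right object to feed into the deterministic inequality — the difference being exactly the $\sigma^2$ terms that must be tracked and absorbed into the $6\alpha^2\sigma^2$ slack rather than lost. A clean way to handle this is to first prove the deterministic inequality as a pointwise (sample-path-independent) quadratic identity/inequality in terms of $\Phi$, then add and subtract $\tPhi - \Phi = \eta$ everywhere, expand, take conditional expectations to kill the linear-in-$\eta$ terms, and bound the remaining $\|\eta\|^2$ terms; the footnote in the excerpt already signals that this conditioning-and-tower-rule argument is exactly how one lifts the additive-noise analysis of \cite{hsieh2019convergence} to the general stochastic-oracle setting of Assumption \ref{var_assmp:main}. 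Since the statement is attributed to \cite{hsieh2019convergence}, I would present this as a reduction: cite their Theorem 5 for the deterministic core and supply only the short noise-lifting argument.
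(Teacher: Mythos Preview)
The paper does not prove this theorem: it is stated as a direct citation of Theorem~5 in \cite{hsieh2019convergence}, with only a footnote remarking that their additive-noise analysis extends to the stochastic-oracle setting of Assumption~\ref{var_assmp:main} by conditioning on the current iterate and applying the tower rule. Your proposal is therefore entirely consistent with the paper's treatment and in fact spells out in more detail the very reduction the footnote alludes to; presenting it as you suggest---cite \cite{hsieh2019convergence} for the core contraction and supply the short noise-lifting argument---matches the paper exactly.
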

This is similar to Theorem \ref{Thm_GDA} for GDA. However, we can see that for OGDA, the range of permissible stepsizes goes all the way up to $\mathcal{O}(1/L)$ whereas for GDA, the stepsizes are upper bounded by $\mathcal{O}(\mu/L^2)$.

The result in Theorem \ref{Thm_OGDA} shows that for OGDA with constant stepsize $\alpha$, the iterates converge to an $\bigO(\alpha)$ neighborhood of the saddle point. Next, we analyze a multistage version of OGDA (M-OGDA) similar to the analysis of M-GDA in Section \ref{sec:MGDA}. We show that the iterates of M-OGDA converge to the unique saddle point at a rate where the variance decays as $\mathcal{O}(\sigma^2/n)$, which is optimal (and also achieved by M-GDA), but the bias term decays as $\bigO(1) \exp(-n/\Theta(\kappa))$, which "accelerates" GDA in terms of its dependence on $\kappa$. More formally, we state the following theorem which is analogous to Theorem \ref{main_result} for M-GDA and present the convergence rate of M-OGDA (we omit the proof as it is very similar to that of Theorem \ref{main_result}): 
\begin{algorithm}[t]
     \caption{ Multistage Stochastic Optimistic Gradient Descent Ascent Algorithm (M-OGDA)}
     \label{Algorithm2}
     \begin{algorithmic}
    \STATE {\bfseries Input:}Initial iterate $z_0^0$, the stepsize sequence $\{\al_k\}_{k=1}^K$, the stage-length sequence $\{n_k\}_{k=1}^K$.
    \STATE Set $n_0 = 0$ and $w_0^0 = z_0^0$;
    \FOR{$k = 1;\ k \leq K;\ k = k + 1$}
     \STATE Set $z_0^k = z_{n_{k-1}}^{k-1}$ and $w_0^k = w_{n_{k-1}}^{k-1}$;
    \FOR{$m = 0;\ m < n_k;\ m = m + 1$}
     \STATE Set $z_{m+1}^k = w_m^k - \alpha_k \tPhi(z_m^k)$ 
     \STATE Set $w_{m+1}^k = w_m^k - \alpha_k \tPhi(z_{m+1}^k)$
    \ENDFOR
    \ENDFOR
   \end{algorithmic}
\end{algorithm}
\begin{theorem}\label{main_result_OGDA}
Suppose that the conditions in Assumptions \ref{var_assmp:main} and \ref{ass:scsc} are satisfied. Let $\{(w_m^k,z_m^k)_{m=0}^{n_k}\}_{k=1}^K$ be the iterates generated by M-OGDA (Algorithm \ref{Algorithm2}) with the following parameters
\begin{equation}\label{M-OGDA_parametres}
\begin{aligned}
\alpha_1 &= \frac{1}{8L}, n_1 \geq 1 \\
\alpha_k &= \frac{1}{L 2^{k+3}}, n_k = \ceil{p 2^{k+3} \kappa \log(2)} \quad (k \geq 2),	
\end{aligned}
\end{equation}
where $p \geq 2$ is an arbitrary positive number. Then, for any $k \geq 1$, we have	
\begin{align*}
\E \left [ \| w_{n_k}^k - z^* \|^2 \right ] & \leq \frac{\exp\left (-n_1/(8 \kappa)\right) }{2^{p(k-1)}} \|z_0^0 - z^*\|^2 + \frac{\sigma^2}{2^{k-1} L \mu}.
\end{align*}
\end{theorem}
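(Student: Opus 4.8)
The plan is to replay, almost verbatim, the induction in the proof of Theorem~\ref{main_result}, with two modifications: the one-step recursion of Theorem~\ref{Thm_GDA} is replaced by that of Theorem~\ref{Thm_OGDA}, and the scalar error $\|z-z^*\|^2$ is replaced by the augmented Lyapunov quantity appearing there. Concretely, for stage $j$ define
\[
\mathcal{E}_m^j \;:=\; \E\!\left[\|w_m^j - z^*\|^2\right] \;+\; \alpha_j^2\,\E\!\left[\|\tPhi(z_m^j) - \tPhi(z_{m-1}^j)\|^2\right],
\]
with the convention that the gradient-difference term vanishes at initialization (take $z_{-1}^0 = z_0^0$, consistent with $w_0^0 = z_0^0$), so that $\mathcal{E}_0^1 = \|z_0^0 - z^*\|^2$. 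Since $\alpha_j \le 1/(8L)$ for every $j$ (indeed $\alpha_1 = 1/(8L)$ and $\alpha_j = 1/(L2^{j+3}) \le 1/(32L)$ for $j\ge 2$), Theorem~\ref{Thm_OGDA} gives $\mathcal{E}_m^j \le (1-\alpha_j\mu)\mathcal{E}_{m-1}^j + 6\alpha_j^2\sigma^2$ for $1\le m\le n_j$; unrolling this over the inner steps of stage $j$ and summing the geometric series yields
\[
\mathcal{E}_{n_j}^j \;\le\; (1-\alpha_j\mu)^{n_j}\,\mathcal{E}_0^j \;+\; \frac{6\alpha_j\sigma^2}{\mu}.
\]

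The one genuinely new ingredient is the stage transition. At the start of stage $k+1$ we have $z_0^{k+1} = z_{n_k}^k$, $w_0^{k+1} = w_{n_k}^k$, and the gradient recycled by the first OGDA step is $\tPhi(z_{n_k-1}^k)$, so $\mathcal{E}_0^{k+1} = \E[\|w_{n_k}^k - z^*\|^2] + \alpha_{k+1}^2\,\E[\|\tPhi(z_{n_k}^k) - \tPhi(z_{n_k-1}^k)\|^2]$. The schedule in \eqref{M-OGDA_parametres} is non-increasing ($\alpha_1 = 1/(8L) > 1/(32L) = \alpha_2$, and $\alpha_{k+1} = \tfrac12\alpha_k$ for $k\ge 2$), hence $\alpha_{k+1}^2 \le \alpha_k^2$ and therefore $\mathcal{E}_0^{k+1} \le \mathcal{E}_{n_k}^k$. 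Combining this with the within-stage bound above gives the exact analogue of \eqref{ineq1_main_result},
\[
\mathcal{E}_{n_{k+1}}^{k+1} \;\le\; (1-\alpha_{k+1}\mu)^{n_{k+1}}\,\mathcal{E}_{n_k}^k \;+\; \frac{6\alpha_{k+1}\sigma^2}{\mu}.
\]
This bookkeeping at the stage boundaries is the main (and essentially the only) obstacle not already settled by the M-GDA argument: one must be sure that the augmented Lyapunov function carried over from stage $k$ (with coefficient $\alpha_k^2$) dominates the one stage $k+1$ begins with (coefficient $\alpha_{k+1}^2$), which is exactly where monotonicity of the stepsize sequence is used.

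Finally I would induct on $k$ with hypothesis $\mathcal{E}_{n_k}^k \le 2^{-p(k-1)}\exp(-n_1/(8\kappa))\|z_0^0 - z^*\|^2 + \sigma^2/(2^{k-1}L\mu)$. For $k=1$: $\alpha_1\mu = 1/(8\kappa)$ and $1-t\le e^{-t}$ give $\mathcal{E}_{n_1}^1 \le \exp(-n_1/(8\kappa))\|z_0^0-z^*\|^2 + \tfrac34\,\sigma^2/(L\mu)$, below the claim since $\tfrac34\le 1$. For the step, $\alpha_{k+1}\mu = 1/(\kappa 2^{k+4})$ and $n_{k+1} \ge p\,2^{k+4}\kappa\log 2$ give $(1-\alpha_{k+1}\mu)^{n_{k+1}} \le \exp(-p\log 2) = 2^{-p}$, while $6\alpha_{k+1}\sigma^2/\mu = \tfrac38\,\sigma^2/(2^k L\mu)$; substituting the hypothesis, the bias coefficient becomes $2^{-p}\cdot 2^{-p(k-1)} = 2^{-pk}$ and the variance coefficient becomes $2^{1-p} + \tfrac38 \le \tfrac12 + \tfrac38 = \tfrac78 \le 1$ for $p\ge 2$, closing the induction. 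Since $\E[\|w_{n_k}^k - z^*\|^2] \le \mathcal{E}_{n_k}^k$, the stated inequality follows; everything beyond the transition step is the same geometric-series accounting as in Theorem~\ref{main_result}.
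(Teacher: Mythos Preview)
Your proposal is correct and follows exactly the approach the paper intends: the paper itself omits the proof, stating only that it is ``very similar to that of Theorem~\ref{main_result},'' and what you have written is precisely the natural adaptation of that induction with Theorem~\ref{Thm_OGDA} substituted for Theorem~\ref{Thm_GDA}. Your explicit treatment of the stage transition---using monotonicity of $\{\alpha_j\}$ to ensure the augmented Lyapunov quantity does not increase when the stepsize drops---is the one detail genuinely not present in the M-GDA argument, and you handle it correctly; the remaining arithmetic (the base case $6\alpha_1\sigma^2/\mu = \tfrac34\sigma^2/(L\mu)\le \sigma^2/(L\mu)$ and the inductive variance check $2^{1-p}+\tfrac38\le 1$ for $p\ge 2$) is right.
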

Similar to the discussion in Section \ref{sec:MGDA}, we next state how our result leads to bounds on distance of each iterate to the saddle point of Problem \ref{main_prob}. In addition, we propose proper choice of parameters in general as well as in the case that the iteration budget is known in advance, the results corresponding to Corollaries \ref{cor_anyN} and \ref{result_general} for M-GDA. Before stating this corollary, we define $\{w_n\}_n$ to be the sequence which is obtained by concatenating the $\{ (w_m^k)_{m=0}^{n_k} \}_{k=1}^K$ sequences, i.e., 
\begin{equation}\label{concatenate_O}
w_n = w_{n-\sum_{i=1}^{k-1} n_i}^{k}  \quad \text{for } \sum_{i=1}^{k-1} n_i < n \leq \sum_{i=1}^{k} n_i.
\end{equation}
\begin{corollary}\label{M-OGDA_AllResults}
Suppose that the conditions in Assumptions \ref{var_assmp:main} and \ref{ass:scsc} are satisfied. Let $\{ (w_m^k,z_m^k)_{m=0}^{n_k} \}_{k=1}^K$ be the iterates generated by M-OGDA (Algorithm \ref{Algorithm2}) with the parameters given in Theorem \ref{main_result}. Also, recall the definition of the concatenated sequence $\{ w_n \}$ from \eqref{concatenate_O}. Then, for any $n > n_1$, we have
\begin{align}
 \E \left [ \| w_n - z^* \|^2 \right]  & \leq \bigO(1) \left ( \frac{\exp\left (-n_1/(8 \kappa)\right) }{\left ((n-n_1)/ (\kappa p) \right )^p} \|z_0 - z^*\|^2 + \frac{p\sigma^2}{(n-n_1) \mu^2} \right ),
\end{align}
In particular, assume choosing $n_1 = \ceil{8 p \kappa \log(p \kappa^2)}$.  Then, for any $n \geq 2 n_1$, we have
\begin{align}
 \E \left [ \| w_n - z^* \|^2 \right]  & \leq \bigO(1) \left ( 1/n^p \|z_0 - z^*\|^2 + \frac{p\sigma^2}{n \mu^2} \right ). 
\end{align}
Also, when the number of iterations $n$ is known in advance, choosing $p=2, n_1= \frac{n}{C}$ with $C \geq 2$, implies
\begin{align}
 \E \left [ \| w_n - z^* \|^2 \right]  & \leq \bigO(1) \left ( \exp\left (-\Theta(1) n/\kappa\right)\|z_0 - z^*\|^2 + \frac{\sigma^2}{n \mu^2} \right )
\end{align}
for any $n \geq 2 \kappa$.
\end{corollary}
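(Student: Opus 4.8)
The plan is to follow the same two-step pattern used for M-GDA in Corollaries~\ref{cor_anyN}--\ref{result_general}. First I would establish the general bound displayed first in the statement for an arbitrary iterate index $n>n_1$; the remaining two bounds then drop out by substituting the prescribed choices of $n_1$ (and, in the last case, $p=2$) together with elementary estimates. The one feature genuinely new relative to the M-GDA argument is that for OGDA the one-step contraction (Theorem~\ref{Thm_OGDA}) is stated not for $\E[\|w_m^k-z^*\|^2]$ but for the Lyapunov-type quantity $\mathcal{L}_m^k := \E[\|w_m^k-z^*\|^2] + \alpha_k^2\,\E[\|\tPhi(z_m^k)-\tPhi(z_{m-1}^k)\|^2]$, so one has to track how the extra gradient-difference term behaves inside a stage and, more importantly, across a stage boundary.

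For the general bound, fix $n>n_1$, let $k$ be the stage containing it ($\sum_{i=1}^{k-1}n_i<n\le\sum_{i=1}^{k}n_i$, necessarily $k\ge2$ since $n>n_1=\sum_{i=1}^1 n_i$), and set $m:=n-\sum_{i=1}^{k-1}n_i$, so $w_n=w_m^k$. Inside stage $k$ the iterates obey OGDA with the constant stepsize $\alpha_k\le 1/(8L)$, so Theorem~\ref{Thm_OGDA} gives $\mathcal{L}_j^k\le(1-\alpha_k\mu)\mathcal{L}_{j-1}^k+6\alpha_k^2\sigma^2$ for all $j$, and unrolling this geometric recursion yields $\E[\|w_m^k-z^*\|^2]\le\mathcal{L}_m^k\le\mathcal{L}_0^k+6\alpha_k\sigma^2/\mu$. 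The key carry-over observation is that, since $z_0^k=z_{n_{k-1}}^{k-1}$, $z_{-1}^k=z_{n_{k-1}-1}^{k-1}$, $w_0^k=w_{n_{k-1}}^{k-1}$ and the stepsizes are non-increasing ($\alpha_k\le\alpha_{k-1}$), the smaller $\alpha_k^2$ only shrinks the gradient-difference term, so $\mathcal{L}_0^k\le\mathcal{L}_{n_{k-1}}^{k-1}$. Running the very induction that proves Theorem~\ref{main_result_OGDA} (that induction in fact controls the full quantity $\mathcal{L}_{n_j}^j$, not merely $\E[\|w_{n_j}^j-z^*\|^2]$) gives $\mathcal{L}_{n_{k-1}}^{k-1}\le\tfrac{\exp(-n_1/(8\kappa))}{2^{p(k-2)}}\|z_0^0-z^*\|^2+\tfrac{\sigma^2}{2^{k-2}L\mu}$ for $k\ge2$. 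Adding the within-stage term $6\alpha_k\sigma^2/\mu=\bigO(\sigma^2/(2^kL\mu))$ produces a bound on $\E[\|w_n-z^*\|^2]$ in terms of the stage index $k$.

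It remains to translate $k$ into $n$. Because $n_i=\lceil p\,2^{i+3}\kappa\log2\rceil$ for $i\ge2$, the partial sums form a geometric series with ratio $2$, so $\sum_{i=2}^{k}n_i=\bigO(p\kappa\,2^k)$; hence if $n$ lies in stage $k$ then $n-n_1\le\bigO(p\kappa\,2^k)$, which (with $p$ held fixed) is exactly what is needed to replace $2^{-p(k-1)}$ by $\bigO(1)\big((n-n_1)/(p\kappa)\big)^{-p}$ and $2^{-k}$ by $\bigO(1)\,p\kappa/(n-n_1)$. Using $\kappa=L/\mu$, the variance piece becomes $\bigO(1)\,p\sigma^2/((n-n_1)\mu^2)$, giving the first displayed inequality. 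The second follows by taking $n_1=\lceil8p\kappa\log(p\kappa^2)\rceil$, so $\exp(-n_1/(8\kappa))\le(p\kappa^2)^{-p}$, and using $n-n_1\ge n/2$ for $n\ge2n_1$, which makes the bias term at most $\bigO(1)\,(2/\kappa)^p n^{-p}\|z_0-z^*\|^2\le\bigO(1)\,n^{-p}\|z_0-z^*\|^2$ and the variance term $\bigO(1)\,p\sigma^2/(n\mu^2)$. The third follows by taking $p=2$ and $n_1=n/C$ with $C\ge2$: again $n-n_1\ge n/2$, the variance term is $\bigO(1)\,\sigma^2/(n\mu^2)$, and the bias term is $\bigO(1)\exp(-n/(8C\kappa))\,\tfrac{\kappa^2}{n^2}\|z_0-z^*\|^2\le\bigO(1)\exp(-\Theta(1)n/\kappa)\|z_0-z^*\|^2$, where the last step uses $\kappa^2/n^2\le1$ for $n\ge2\kappa$ and that $C$ is an absolute constant.

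I expect the main obstacle to be the bookkeeping in the second paragraph: verifying precisely that $\mathcal{L}_0^k\le\mathcal{L}_{n_{k-1}}^{k-1}$, i.e. that the OGDA Lyapunov function passes cleanly through a stage boundary despite the stepsize drop and the reindexing of the ``previous'' iterate, and checking that the geometric-sum estimate $n-n_1=\bigO(p\kappa\,2^k)$ holds uniformly over the stage index so the $k\to n$ conversion is lossless up to $\bigO(1)$ constants. Everything else — the within-stage unrolling and the three parameter substitutions — is routine.
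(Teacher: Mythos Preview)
Your proposal is correct and follows exactly the route the paper intends: the paper does not give a separate proof of this corollary, deferring instead to the M-GDA argument of Corollary~\ref{cor_anyN} (and then Corollaries~\ref{result_knowing_n}--\ref{result_general}), and your plan reproduces that argument with the OGDA parameters ($\kappa$ in place of $\kappa^2$, $1/(8L)$ in place of $\mu/(4L^2)$). The one point you add beyond the paper---tracking the full Lyapunov quantity $\mathcal{L}_m^k$ rather than just $\E[\|w_m^k-z^*\|^2]$ and checking that $\mathcal{L}_0^k\le\mathcal{L}_{n_{k-1}}^{k-1}$ across a stage boundary because $\alpha_k\le\alpha_{k-1}$---is a genuine detail the paper glosses over by omitting the proof, and your handling of it is the right one.
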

Once again, we would like to highlight that the results in Corollary \ref{M-OGDA_AllResults} are presented in terms of the $n^{th}$ iterate $w_n$ which is obtained by concatenating the iterates of all stages, including inner iterations (as given in \eqref{concatenate_O}). As a result, in comparing our results to other methods in Table \ref{table-1}, we take into account the inner stage iterations. 
\section{Conclusion}
In this paper, we propose multistage versions of Gradient Descent Ascent (GDA) and Optimistic Gradient Descent Ascent (OGDA) algorithms to solve the stochastic minimax problems. In particular, these algorithms are the first to achieve linear rate in bias and optimal $\bigO(\sigma^2/n)$ rate in variance, simultaneously. We also show that Multistage OGDA improves the bias rate of Multistage GDA from $\bigO(\exp(-\Theta(1) n/\kappa^2))$ to $\bigO(\exp(-\Theta(1) n/\kappa))$ which is the best known rate in deterministic minimax optimization. 
\newpage
\bibliographystyle{apalike}
\bibliography{main}

\newpage
\appendix

\section{Proof of Lemma \ref{lemma:SCSC_monotone}}\label{lemma:SCSC_monotone_proof}
We have:
\begin{align}
    \langle & \nabla_{x} f(x, y)  - \nabla_{x} f(\hat{x}, \hat{y}), x - \hat{x} \rangle \nonumber \\
    &= \ \langle \nabla_{x} f(x, y), x - \hat{x} \rangle - \langle \nabla_{x} f(\hat{x}, \hat{y}), x - \hat{x} \rangle \nonumber \\
    &\geq f(x, y) - f(\hat{x},y) + \frac{\mu}{2}\|\hat{x} - x \|^2 + f(\hat{x}, \hat{y}) - f(x, \hat{y}) + \frac{\mu}{2}\|\hat{x} - x \|^2 \label{ineq:monotone_1} \\
    &= f(x, y) - f(\hat{x},y) + f(\hat{x}, \hat{y}) - f(x, \hat{y}) + \mu \|\hat{x} - x \|^2
    \label{eq:monotone_1}
\end{align}
where \eqref{ineq:monotone_1} follows from \eqref{SConvex}. Similarly
\begin{align}
    \langle & -\nabla_{y} f(x, y)  + \nabla_{y} f(\hat{x}, \hat{y}), y - \hat{y} \rangle \nonumber \\
    &= \ - \langle \nabla_{y} f(x, y), y - \hat{y} \rangle + \langle \nabla_{y} f(\hat{x}, \hat{y}), y - \hat{y} \rangle \nonumber \\
    &\geq -f(x, y) + f(x,\hat{y}) + \frac{\mu}{2}\|\hat{y} - y \|^2 - f(\hat{x}, \hat{y}) + f(\hat{x}, y) + \frac{\mu}{2}\|\hat{y} - y \|^2 \nonumber \\
    &= -f(x, y) + f(\hat{x},y) - f(\hat{x}, \hat{y}) + f(x, \hat{y}) + \mu \|\hat{y} - y \|^2
    \label{eq:monotone_2}
\end{align}
Adding equations \eqref{eq:monotone_1} and \eqref{eq:monotone_2}, and noting that
\begin{align}
    &\langle \Phi(z)- \Phi(\hat{z}), z - \hat{z} \rangle \nonumber \\
    &= \langle  \nabla_{x} f(x, y)  - \nabla_{x} f(\hat{x}, \hat{y}), x - \hat{x} \rangle + \langle  -\nabla_{y} f(x, y)  + \nabla_{y} f(\hat{x}, \hat{y}), y - \hat{y} \rangle
\end{align}
we 
obtain the right hand side of \eqref{sc_sm_g}. Similarly, we have:
\begin{align}
    \langle & \nabla_{x} f(x, y)  - \nabla_{x} f(\hat{x}, \hat{y}), x - \hat{x} \rangle \nonumber \\
    &= \ \langle \nabla_{x} f(x, y), x - \hat{x} \rangle - \langle \nabla_{x} f(\hat{x}, \hat{y}), x - \hat{x} \rangle \nonumber \\
    &\leq f(x, y) - f(\hat{x},y) + \frac{L}{2}\|\hat{x} - x \|^2 + f(\hat{x}, \hat{y}) - f(x, \hat{y}) + \frac{L}{2}\|\hat{x} - x \|^2 \label{ineq:monotone_11} \\
    &= f(x, y) - f(\hat{x},y) + f(\hat{x}, \hat{y}) - f(x, \hat{y}) + L \|\hat{x} - x \|^2
    \label{eq:monotone_12}
\end{align}
where \eqref{ineq:monotone_11} follows from \eqref{eq:l_smooth_ch}. Similarly
\begin{align}
    \langle & -\nabla_{y} f(x, y)  + \nabla_{y} f(\hat{x}, \hat{y}), y - \hat{y} \rangle \nonumber \\
    &= \ - \langle \nabla_{y} f(x, y), y - \hat{y} \rangle + \langle \nabla_{y} f(\hat{x}, \hat{y}), y - \hat{y} \rangle \nonumber \\
    &\leq -f(x, y) + f(x,\hat{y}) + \frac{L}{2}\|\hat{y} - y \|^2 - f(\hat{x}, \hat{y}) + f(\hat{x}, y) + \frac{L}{2}\|\hat{y} - y \|^2 \nonumber \\
    &= -f(x, y) + f(\hat{x},y) - f(\hat{x}, \hat{y}) + f(x, \hat{y}) + L \|\hat{y} - y \|^2
    \label{eq:monotone_22}
\end{align}
Adding equations \eqref{eq:monotone_12} and \eqref{eq:monotone_22} we 
obtain the left hand side of \eqref{sc_sm_g}.
\section{Proof of Lemma \ref{lem_Phi}} \label{lem_Phi_proof}
From Lemma \ref{lemma:SCSC_monotone}, we have:
\begin{align}
\label{eq:mid_thm_1}
    \langle \Phi(z)- \Phi(\hat{z}), z - \hat{z} \rangle \geq \mu\| z - \hat{z} \|^2
\end{align}
and from Assumption \ref{def:lips_grad}, we have:
\begin{align}\label{eq:mid_thm_2}
    \| \Phi(z) - \Phi(\hat{z}) \|^2 \leq 4L^2 \| z - \hat{z} \|^2.
\end{align}
Combining Equation \eqref{eq:mid_thm_1} and \eqref{eq:mid_thm_1}, we have:
\begin{align*}
    \langle \Phi(z)- \Phi(\hat{z}), z - \hat{z} \rangle \geq \frac{\mu}{4L^2}\| \Phi(z) - \Phi(\hat{z}) \|^2.
\end{align*}
\section{Proof of Theorem \ref{Thm_GDA}}\label{Thm_GDA_proof}
 First, note that for $\rho^2 = 1- \alpha \mu$ and $P = p \otimes I_{m+n}$ with $p=1/\alpha$, we have
\begin{align}\label{p_inequality}
\begin{bmatrix} 
	A^\top P A - \rho^2 P & A^\top P B\\
    B^\top P A & B^\top P B
\end{bmatrix} =
p \begin{bmatrix} 
	1 - \rho^2 & -\alpha\\
    -\alpha & \alpha^2
\end{bmatrix}
= \begin{bmatrix} 
	\mu & -1\\
    -1 & \alpha
\end{bmatrix}
\preceq  
\begin{bmatrix}
    \mu & -1 \\ -1 & \mu/(4L^2)    
\end{bmatrix}
\end{align}
where the last inequality follows from the fact that $\alpha \leq \mu/(4L^2)$. This result implies
\begin{align*}
\begin{bmatrix} 
	z_{k-1}-z^*\\
    \Phi (z_{k-1})
\end{bmatrix}^\top
\begin{bmatrix} 
	A^\top P A - \rho^2 P & A^\top P B\\
    B^\top P A & B^\top P B
\end{bmatrix}
\begin{bmatrix} 
	z_{k-1}-z^*\\
    \Phi (z_{k-1})
\end{bmatrix} \leq 
\begin{bmatrix} 
	z_{k-1}-z^*\\
    \Phi (z_{k-1})
\end{bmatrix}^\top
\begin{bmatrix}
    \mu & -1 \\ -1 & \mu/(4L^2)    
\end{bmatrix}
\begin{bmatrix} 
	z_{k-1}-z^*\\
    \Phi (z_{k-1})
\end{bmatrix}.
\end{align*}
Substituting left and right hand side by using Corollary \ref{corollary:Matrix_format} and Lemma \ref{storage_update}, respectively, yields 
\begin{align}
\E[V_p(z_{k})] - (1-\alpha \mu) \E[V_p(z_{k-1})] - 2 \sigma^2 \alpha^2 p \leq 0.    
\end{align}
Finally, dividing both sides by $p$ completes the proof \eqref{ineq_GDA_1step}. To show the second result, note that by using \eqref{ineq_GDA_1step} sequentially, we have
\begin{align}
\E[\|z_{k}-z^*\|^2] & \leq  (1- \alpha \mu)^{k} \E[\|z_{0}-z^*\|^2] + 2 \alpha^2 \sigma^2 \left (1+ (1-\alpha \mu) + ... + (1-\alpha \mu)^k \right ) \nonumber \\
& \leq  (1- \alpha \mu)^{k} \E[\|z_{0}-z^*\|^2] + 2 \alpha^2 \sigma^2 \sum_{i=0}^\infty (1-\alpha \mu)^i \nonumber \\
& =  (1- \alpha \mu)^{k} \E[\|z_{0}-z^*\|^2] + 2 \alpha^2 \sigma^2 \frac{1}{1-(1-\alpha \mu)} = (1- \alpha \mu)^{k} \E[\|z_{0}-z^*\|^2] + 2 \alpha \sigma^2/\mu \nonumber
\end{align}
which completes the proof.
\section{Proof of Example \ref{lemma:quad_example}}\label{lemma:quad_example_proof}
We have the function:
\begin{align}
    f(x,y) = \frac{\mu}{2} x^2 + L xy - \frac{\mu}{2}y^2
\end{align}
The gradient at step $k$ is corrupted by noise $\xi_k^x$ and $\xi_k^y$ which we assume to be iid $\sim \mathcal{N}(0,\sigma)$. The GDA method when applied to this problems leads to:
\begin{align}
    x_{k+1} &= x_k - \alpha (\mu x_k + Ly_k + \xi^x_k) \nonumber \\
    y_{k+1} &= y_k + \alpha (-\mu y_k + Lx_k + \xi^y_k)
\end{align}
This gives:
\begin{align}
    \mathbb{E}\left[ \|x_{k+1}\|^2 + \|y_{k+1}\|^2 \right] &= \left((1 - \alpha \mu)^2 + \alpha^2L^2 \right)(\mathbb{E}\left[ \|x_{k}\|^2 + \|y_{k}\|^2 \right]) + 2\alpha^2 \sigma^2 \nonumber \\
    &= \left(1 - 2\alpha \mu + \alpha^2(\mu^2+L^2) \right)(\mathbb{E}\left[ \|x_{k}\|^2 + \|y_{k}\|^2 \right]) + 2\alpha^2 \sigma^2 \nonumber \\
    &\geq \left(1 - 2\alpha \mu \right)(\mathbb{E}\left[ \|x_{k}\|^2 + \|y_{k}\|^2 \right]) + 2\alpha^2 \sigma^2
    \label{eq:quad_mid}
\end{align}
which proves the first part of the lemma. Note that when the gradients are not corrupted by noise (i.e. when $\sigma = 0$, we have)
\begin{align}
    \|x_{k+1}\|^2 + \|y_{k+1}\|^2 &= \left((1 - \alpha \mu)^2 + \alpha^2L^2 \right)( \|x_{k}\|^2 + \|y_{k}\|^2 ) \nonumber \\
    &= \left(1 - 2\alpha \mu + \alpha^2(\mu^2+L^2) \right)( \|x_{k}\|^2 + \|y_{k}\|^2 )
\end{align}

The coefficient on the right side is minimized for $\alpha = \frac{\mu}{\mu^2 + L^2}$. Substituting this in equation \eqref{eq:quad_mid}, we get:
\begin{align}
     \|x_{k+1}\|^2 + \|y_{k+1}\|^2   &= \left(1 - \frac{2\mu^2}{\mu^2 + L^2} + \frac{\mu^2}{\mu^2 + L^2} \right) (\|x_{k}\|^2 + \|y_{k}\|^2) \nonumber \\
    &= \left(1 - \frac{\mu^2}{\mu^2 + L^2} \right)( \|x_{k}\|^2 + \|y_{k}\|^2 ) 
\end{align}
Now, making the substitution $\kappa = \frac{L}{\mu}$, we have:
\begin{align}
     \|x_{k+1}\|^2 + \|y_{k+1}\|^2   &= \left(1 - \frac{1}{1 + \kappa^2} \right)(\|x_{k}\|^2 + \|y_{k}\|^2 )  \nonumber \\
    &\geq  \left(1 - \frac{1}{\kappa^2} \right)(\|x_{k}\|^2 + \|y_{k}\|^2 ) 
\end{align}
Therefore, for any other stepsize $\alpha > 0$, we have:
\begin{align}
     \|x_{k+1}\|^2 + \|y_{k+1}\|^2  &\geq  \left(1 - \frac{1}{\kappa^2} \right)(\|x_{k}\|^2 + \|y_{k}\|^2)
\end{align}
\section{Proof of Corollary \ref{cor_anyN}}\label{Proof_cor_anyN}
Let us define $T(k):= \sum_{i=1}^{k} n_i$. 
Note that, for $k \geq 2$, the fact that $\ceil{x} \leq 2x$ for positive $x$ implies:
\begin{align}
T(k) - n_1 = \sum_{i=2}^{k} n_i = \sum_{i=2}^{k} \ceil{p 2^{i+2} \kappa^2 \log(2)} \leq 2 p \kappa^2 \log(2) \sum_{i=2}^{k} 2^{i+2}.	
\end{align}
As a consequence, and using $\sum_{i=2}^{k} 2^{i+2} = 16 (2^{k-1}-1)$, we have
\begin{equation}\label{bound1_n-n_1}
T(k) - n_1 \leq 32 p \kappa^2 \log(2) (2^{k-1}-1).
\end{equation}
Now, let $k$ be the largest number such that $T(k) < n$, i.e., $T(k) < n \leq T(k+1)$. Thus, using \eqref{bound1_n-n_1}, we obtain 
\begin{equation}\label{bound2_n-n_1}
n-n_1 \leq T(k+1) -n_1 \leq 32 p \kappa^2 \log(2) (2^{k}-1),
\end{equation}
and as a result, we have
\begin{equation}\label{bound_n-n_1}
2^k \geq \Theta(1) \frac{n-n_1}{p \kappa^2}	
\end{equation}
 where the constants in $\Theta(1)$ are independent of problems' parameters. 

Next, note that, by Theorem \ref{main_result}, we have
\begin{align}\label{ineq_recall}
\E \left [ \| z_{n_k}^k - z^* \|^2 \right ] & \leq \frac{\exp\left (-n_1/(4 \kappa^2)\right) }{2^{p(k-1)}} \|z_0 - z^*\|^2 + \frac{\sigma^2}{2^k L^2}	.
\end{align}
Also, by Theorem \ref{Thm_GDA} for stage $k+1$, we have
\begin{align}
\E \left \|z_n - z^*\|^2 \right ] &\leq \exp(- \alpha_{k+1} (n-T(k)) \mu) \E \left [ \| z_{n_k}^k - z^* \|^2 \right ] + 2 \alpha_{k+1} \sigma^2/\mu \\
& \leq 	\E \left [ \| z_{n_k}^k - z^* \|^2 \right ] + \frac{\sigma^2}{2^{k+2} L^2} \\
& \leq \frac{\exp\left (-n_1/(4 \kappa^2)\right) }{2^{p(k-1)}} \|z_0 - z^*\|^2 + 2 \frac{\sigma^2}{2^k L^2}
\end{align}
where the last inequality follows from \eqref{ineq_recall}. Now, plugging in \eqref{bound_n-n_1} in this bound completes the proof.
\end{document}